\tikzstyle{empty}=[circle,draw=black!80,thick]
\tikzstyle{nero}=[circle,draw=black!80,fill=black!80,thick]
\title{\textbf{On permutation modules and decomposition numbers for symmetric groups}}                    
\author{Eugenio Giannelli}
\date{}
\newtheorem{teo1.2}{Theorem 1.2}[section]
\newtheorem{teo1.1}{Theorem 1.1}[section]
\newtheorem{teo}{Theorem}[section]
\newtheorem{cor}[teo]{Corollary}
\newtheorem{lemma}[teo]{Lemma}
\newtheorem{prop}[teo]{Proposition}
\newtheorem{esem}[teo]{Example}
\newtheorem*{conjecture}{Conjecture}
\theoremstyle{definition}
\begin{document}
\bibliographystyle{archmath}

\maketitle


\begin{abstract}
We study the indecomposable summands of the permutation module obtained by inducing the trivial $\mathbb{F}(S_a\wr S_n)$-module to the full symmetric group $S_{an}$ for any field $\mathbb{F}$ of odd prime characteristic $p$ such that $a<p\leq n$.
In particular we characterize the vertices of such indecomposable summands. As a corollary we will disprove a modular version of Foulkes' Conjecture.

In the second part of the article we will use this information to give a new description of some columns of the decomposition matrices of symmetric groups
in terms of the ordinary character of the Foulkes module $\phi^{(a^n)}$.
\end{abstract}

\section{Introduction}
The determination of the decomposition matrices and the study of the modular structure of permutation modules are two important open problems in the representation theory of symmetric groups.
 
Young permutation modules were deeply studied by James in \cite{JamesErd}, Klyachko in \cite{KLY} and Grabmeier in \cite{Grab}. They completely parametrized 
the indecomposable summands of such modules (known as Young modules) and developed a Green correspondence for those summands.
Their original description of the modular structure of Young modules was based on Schur algebras. 
More recently Erdmann in \cite{KE} described completely the Young modules using only the representation theory of the symmetric groups. 
In particular it is proved that the vertex of a given Young module of the symmetric group $S_n$ of degree $n$ is always conjugate to a Sylow $p$-subgroup of $S_{\lambda_1}\times S_{\lambda_2}\times\cdots\times S_{\lambda_k}$, where $(\lambda_1,\ldots,\lambda_k)$ is a partition of $n$.

The problem of finding decomposition numbers for symmetric groups in prime characteristic has been studied extensively.
The decomposition
matrix of the symmetric group~$S_n$ in prime characteristic~$p$
has rows labelled by the partitions of $n$, and columns
by the so called $p$-regular partitions of $n$, namely, partitions of $n$ with less than $p$ parts of any given size.
The decomposition number $d_{\mu\nu}$ is the entry of the decomposition matrix that 
records the number of composition factors of the Specht module $S^\mu$,
defined over a field of characteristic~$p$, 
that are isomorphic
to the simple module $D^\nu$, defined by James in \cite{JamesIrrepsBPLMS} as the unique top composition
factor of $S^\nu$. 

The main purpose of this paper is to begin the study, over a field of prime characteristic, of a new family of permutation modules known as Foulkes modules.
The new information obtained will allows us to draw corollaries on decomposition numbers. In particular we will give 
a new combinatorial description of certain columns of the decomposition matrices of symmetric groups.

 In characteristic zero the study of Foulkes modules was mainly connected to the long standing Foulkes' Conjecture, stated by H.O. Foulkes in \cite{Foulkes}. For some recent advances on the ordinary character of the Foulkes modules see \cite{Giannelli} and \cite{PagetWildon}.
For any $a$ and $n$ natural numbers the Foulkes module $H^{(a^n)}$ is  obtained by inducing the trivial representation of the wreath product $S_a\wr S_n$ up to the full symmetric group $S_{an}$.  If the prime $p$ is the characteristic of the underlying field $\mathbb{F}$, we observe that whenever $n<p$ then the indecomposable summands of the Foulkes module are Young modules, since $H^{(a^n)}$ is isomorphic to a direct summand of the Young permutation module $M^{(a^n)}$ in this case.
On the other hand for $n\geq p$ we do not have any precise information on the non-projective indecomposable summands of $H^{(a^n)}$.
We will focus on the study of these new indecomposable $\mathbb{F}S_{an}$-modules. 
In particular we will give the following description of the possible vertices for all $a$ and $n$ such that $a<p\leq n$. 
\begin{teo}\label{T1}
Let $p$ be an odd prime and let $a$ and $n$ be natural numbers such that $a<p\leq n$.
Let $U$ be an indecomposable non-projective summand of the $\mathbb{F}S_{an}$-module $H^{(a^n)}$ and let $Q$ be a vertex of $U$. Then there exists $s\in\{1,2,\ldots,\lfloor\frac{n}{p}\rfloor\}\cap\mathbb{N}$ such that $Q$ is conjugate to a Sylow $p$-subgroup of $S_a\wr S_{sp}$.
Moreover  
the Green correspondent of $U$ admits a tensor factorization
$V \boxtimes Z$ as a module for $\mathbb{F}((N_{S_{asp}}(Q)/Q) \times S_{a(n-sp)})$, where
$V$ is isomorphic to the projective cover of the trivial $\mathbb{F}(N_{S_{asp}}(Q)/Q)$-module and $Z$ is  an
indecomposable projective summand of $H^{(a^{n-sp})}$.
\end{teo}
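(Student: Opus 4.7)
The plan is to exploit that $H^{(a^n)}$ is a trivial source (equivalently, $p$-permutation) $\mathbb{F}S_{an}$-module, with $H^{(a^n)}=\mathbb{F}[\Omega]$, where $\Omega$ denotes the set of partitions of $\{1,\ldots,an\}$ into $n$ unordered blocks of size $a$. By the Brauer/Brou\'e correspondence, the indecomposable summands of $H^{(a^n)}$ with vertex $Q$ are in bijection with the indecomposable projective summands of the Brauer quotient $H^{(a^n)}(Q)=\mathbb{F}[\Omega^Q]$ regarded as a module for $\overline{N}:=N_{S_{an}}(Q)/Q$, so the problem reduces to locating projective summands of a permutation module.

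The central combinatorial input uses $a<p$: for $\pi\in\Omega^Q$ with a block $B$ whose $Q$-orbit has length $\ell$, the stabiliser $Q_B$ acts on the $a<p$ points of $B$ with orbit sizes simultaneously a $p$-power and at most $a$, hence trivially. Consequently either $\ell=1$ and $B\subseteq\mathrm{fix}(Q)$, or $\ell>1$ and every point of $B$ lies in a $Q$-orbit of length $\ell$, so $B\subseteq\mathrm{supp}(Q)$. Hence every $Q$-fixed partition decomposes as $\pi=\pi_{\mathrm{supp}}\sqcup\pi_{\mathrm{fix}}$, and $|\mathrm{supp}(Q)|$ is divisible by $a$; writing $|\mathrm{supp}(Q)|=asp$ yields $s\geq 1$ (since $U$ is non-projective, $Q$ is non-trivial) and $s\leq\lfloor n/p\rfloor$ (since $asp\leq an$).

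After conjugating $Q$ so that $\mathrm{supp}(Q)=\{1,\ldots,asp\}$, the normaliser splits as $N_{S_{an}}(Q)=N_{S_{asp}}(Q)\times S_{a(n-sp)}$, and the preceding splitting gives an isomorphism of $\overline{N}=(N_{S_{asp}}(Q)/Q)\times S_{a(n-sp)}$-modules
\[ H^{(a^n)}(Q)\;\cong\;\mathbb{F}[\Omega^Q_{\mathrm{supp}}]\,\boxtimes\, H^{(a^{n-sp})}. \]
Since indecomposable projective summands of an external tensor product are exactly external tensor products of indecomposable projective summands of the factors, every indecomposable projective $\overline{N}$-summand has the form $V\boxtimes Z$ with $V$ an indecomposable projective summand of $\mathbb{F}[\Omega^Q_{\mathrm{supp}}]$ for $N_{S_{asp}}(Q)/Q$ and $Z$ an indecomposable projective summand of $H^{(a^{n-sp})}$ for $S_{a(n-sp)}$; this immediately identifies the Green correspondent of $U$ (the lift of $V\boxtimes Z$ through $N_{S_{an}}(Q)\to\overline{N}$, with $Q$ acting trivially) with the claimed tensor factorisation.

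The existence of $V$ forces, via orbit-stabiliser analysis on $\Omega^Q_{\mathrm{supp}}$ combined with the classical normaliser-in-$p$-groups lemma, the $N_{S_{asp}}(Q)/Q$-stabiliser of some $\pi_{\mathrm{supp}}$ to have order coprime to $p$, which is equivalent to $Q$ being a Sylow $p$-subgroup of $\mathrm{Stab}_{S_{asp}}(\pi_{\mathrm{supp}})=S_a\wr S_{sp}$; this yields the claimed vertex description. Under this Sylow hypothesis, one verifies that $\mathbb{F}[\Omega^Q_{\mathrm{supp}}]$ is a transitive $N_{S_{asp}}(Q)/Q$-permutation module of dimension equal to the $p$-part of $|N_{S_{asp}}(Q)/Q|$, with trivial module in its head, hence isomorphic to the projective cover of the trivial module. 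The principal technical obstacle is this last step: an explicit description of $N_{S_{asp}}(Q)$ and its transitive action on the $Q$-invariant block arrangements built from $a$-tuples of elementary $Q$-orbits on $\mathrm{supp}(Q)$, together with the dimension count forcing indecomposability of $\mathbb{F}[\Omega^Q_{\mathrm{supp}}]$.
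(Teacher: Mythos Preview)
Your opening moves are sound and agree with the paper's Lemma~3.1: the combinatorial analysis of $Q$-fixed set partitions using $a<p$ correctly forces each block to lie entirely in $\mathrm{supp}(Q)$ or entirely in $\mathrm{fix}(Q)$, giving $|\mathrm{supp}(Q)|=asp$ and the tensor factorisation
\[
H^{(a^n)}(Q)\cong \mathbb{F}[\Omega^{Q}_{\mathrm{supp}}]\boxtimes H^{(a^{n-sp})}.
\]

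The gap is in the next step. You assert that the existence of a projective indecomposable summand $V$ of $\mathbb{F}[\Omega^{Q}_{\mathrm{supp}}]$ over $\bar G=N_{S_{asp}}(Q)/Q$ forces some point-stabiliser to have $p'$-order, hence $Q\in\mathrm{Syl}_p(S_a\wr S_{sp})$. This relies on the general principle ``a transitive permutation module $\mathbb{F}{\uparrow}_{\bar H}^{\bar G}$ has a projective summand only if $p\nmid|\bar H|$'', which is \emph{false}. A concrete counterexample: for $p=2$ and $G=S_4$, the Young permutation module $M^{(2,1,1)}=\mathbb{F}_2{\uparrow}_{S_2}^{S_4}$ has point-stabiliser of even order, yet its summand $Y^{(2,1,1)}$ is projective (since $(2,1,1)$ is $2$-restricted). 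Knowing only that $\mathrm{supp}(Q)=\{1,\dots,asp\}$ does not pin down $Q$: for instance the cyclic group $R_{as}$ generated by a product of $as$ disjoint $p$-cycles has the same support as $Q_s$ but is far from Sylow in $S_a\wr S_{sp}$, and nothing in your orbit-stabiliser step rules it out.

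The paper circumvents this by working bottom-up rather than top-down. Instead of analysing $H^{(a^n)}(Q)$ for an unknown $Q$, it fixes the specific cyclic subgroup $R_{as}$, proves (via restriction to the elementary abelian group $C=\langle z_1\rangle\times\cdots\times\langle z_{as}\rangle$ and the type decomposition of Proposition~3.4/3.5) that $H^{(a^{sp})}(R_{as})$ is \emph{indecomposable} with vertex $Q_s$, and then runs a maximality argument: choose $\ell$ maximal with $R_\ell$ contained in a conjugate of the vertex, deduce $Q_s\le Q$, and show $Q_s=Q$ by exhibiting, from any hypothetical $g\in N_Q(Q_s)\setminus Q_s$, a product of $p$-cycles supported on $\{asp+1,\dots,an\}$ inside $Q$, contradicting maximality of $\ell$. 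The identification $V\cong P_{\mathbb F}$ then comes for free from Scott-module theory (Theorem~2.11), rather than from the transitivity and dimension count you propose, which would itself need justification.
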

This result should be compared with \cite[Theorem 1.2]{GW} which studies summands of certain twists by the $\mathrm{sign}$ character of the permutation module $H^{(2^n)}$. The main tool used to study the vertices of $H^{(a^n)}$ is the Brauer correspondence for $p$-permutation modules as developed by Brou\'e in \cite{Broueperm}.

 It is clear by Theorem \ref{T1} that the non-projective indecomposable summands of the permutation module $H^{(a^n)}$ are not Young modules for any $a<p\leq n$, since their vertices are not Sylow $p$-subgroups of Young subgroups of $S_{an}$. This observation will be sufficient to disprove the modular version of Foulkes' Conjecture. 

In order to present our new result on decomposition numbers we need to introduce the following definition. 
Let $\gamma$ be a $p$-core partition and let $\phi^{(a^n)}$ be the ordinary character afforded by the Foulkes module $H^{(a^n)}$. 
Denote by $\mathcal{F}(\gamma)$ the set containing all the partitions $\mu$ of $an$ such that the $p$-core of $\mu$ is equal to $\gamma$ and such that the irreducible ordinary character of $S_{an}$ labelled by $\mu$ has non-zero multiplicity in the decomposition of $\phi^{(a^n)}$ as a sum of irreducible characters. In symbols $$\mathcal{F}(\gamma)=\{\mu\vdash an\ :\ \gamma(\mu)=\gamma\ \ \text{and}\ \ \left\langle\chi^\mu, \phi^{(a^n)}\right\rangle\neq 0\}.$$

The new results obtained in Theorem \ref{T1} will lead us to prove the following theorem. 

\begin{teo}\label{TTT1}
Let $a,n$ be natural numbers and let $p$ be a prime such that $a<p\leq n$.
Let $\lambda$ be a $p$-regular partition of $na$ such that $\lambda$ has weight $w<a$. Denote by $\gamma$ the $p$-core of $\lambda$.
If $\lambda$ is maximal in $\mathcal{F}(\gamma)$,
then the only non-zero entries in the column labelled by $\lambda$ of the decomposition matrix of $S_{an}$ are in the rows labelled by partitions $\mu\in\mathcal{F}(\gamma)$. Moreover 
$$[S^\mu:D^\lambda]\leq \left\langle\phi^{(a^n)},\chi^{\mu}\right\rangle.$$
\end{teo}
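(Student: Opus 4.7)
The plan is to reduce the theorem to showing that the projective indecomposable $\mathbb{F}S_{an}$-module $P(D^\lambda)$ is a direct summand of $H^{(a^n)}$. Once that is established, write $H^{(a^n)} = P(D^\lambda) \oplus X$, lift to a $\mathbb{Z}_p$-form of the $p$-permutation module $H^{(a^n)}$, and compare ordinary characters using Brauer reciprocity $\chi^{P(D^\lambda)} = \sum_\mu [S^\mu : D^\lambda]\,\chi^\mu$. This gives
\[
\phi^{(a^n)} = \sum_\mu [S^\mu : D^\lambda]\,\chi^\mu + \chi^X,
\]
and since $\chi^X$ is the character of an actual module (non-negative integer combination of irreducibles), coefficientwise comparison yields $[S^\mu : D^\lambda] \leq \langle \phi^{(a^n)}, \chi^\mu\rangle$ for every $\mu$. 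This immediately delivers both conclusions of the theorem: the \emph{moreover} inequality, and the fact that $d_{\mu\lambda} \neq 0$ forces $\mu \in \mathcal{F}(\gamma)$ (combining the bound with the block constraint that $\mu$ must have $p$-core $\gamma$).

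To prove that $P(D^\lambda) \mid H^{(a^n)}$, I would decompose $H^{(a^n)}$ into indecomposable $p$-permutation summands and invoke Theorem~\ref{T1}. Each non-projective summand $U$ has vertex $Q$ conjugate to a Sylow $p$-subgroup of some $S_a \wr S_{sp}$ with $1 \leq s \leq \lfloor n/p\rfloor$, and Green correspondent of the tensor form $V \boxtimes Z$ described there. Projecting onto the block $B_\gamma$ of $\lambda$ and using $|S_a \wr S_{sp}|_p = |S_{sp}|_p$ (since $a<p$), any non-projective summand in $B_\gamma$ is constrained by $s \leq w$. On the other hand, any projective summand of $H^{(a^n)}$ that lies in $B_\gamma$ has the form $P(D^\nu)$ for some $\nu \in \mathcal{F}(\gamma)$: Brauer reciprocity gives $\chi^\nu$ as a constituent (with coefficient $d_{\nu\nu}=1$) of $\chi^{P(D^\nu)}$, and hence of $\phi^{(a^n)}$.

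The critical step, and the main obstacle, is to show that no non-projective summand $U$ of $H^{(a^n)}$ contributes the irreducible character $\chi^\lambda$ to its ordinary character. Granting this and letting $m_\nu$ denote the multiplicity of $P(D^\nu)$ as a direct summand of $H^{(a^n)}$, reading off the $\chi^\lambda$-coefficient of $\phi^{(a^n)}$ yields
\[
\langle \phi^{(a^n)}, \chi^\lambda\rangle = \sum_{\nu \in \mathcal{F}(\gamma)} m_\nu \cdot d_{\lambda\nu}.
\]
Since $d_{\lambda\nu}\neq 0$ forces $\nu \trianglerighteq \lambda$ (as $\nu$ is $p$-regular), and together with $\nu \in \mathcal{F}(\gamma)$ the maximality of $\lambda$ in $\mathcal{F}(\gamma)$ pins down $\nu = \lambda$, we conclude $m_\lambda = \langle\phi^{(a^n)}, \chi^\lambda\rangle \geq 1$, so $P(D^\lambda) \mid H^{(a^n)}$. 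Carrying out the critical step itself requires analysing the ordinary character of $U$ via the character-theoretic Green correspondence applied to $V \boxtimes Z$, using that $Z$ lives inside the smaller Foulkes module $H^{(a^{n-sp})}$ with $s\geq 1$ together with the hypothesis $w<a$ to force every constituent of $\chi^U$ to be labelled by a partition strictly below $\lambda$ in the dominance order on $\mathcal{F}(\gamma)$.
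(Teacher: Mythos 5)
Your overall strategy agrees with the paper's: show $P(D^\lambda)$ is a direct summand of $H^{(a^n)}$, lift via Scott's theorem, and read off the inequality from Brauer reciprocity. But you stop short of a complete argument exactly at what you yourself flag as ``the critical step,'' and the reason is that you only extract a weaker constraint from Theorem~\ref{T1} than is actually available. Your order comparison $|S_a\wr S_{sp}|_p = |S_{sp}|_p \le |S_{wp}|_p$ gives $s\le w$, which leaves open the possibility of non-projective summands of $H^{(a^n)}$ lying in the block $B_\gamma$. The paper instead compares \emph{supports}: a vertex $Q_s$ of a non-projective summand moves exactly $asp$ points, whereas the defect group $D$ of $B_\gamma$ is conjugate to a Sylow $p$-subgroup of $S_{wp}$ (Theorem~\ref{Thm:SymDefectGroups}) and so moves at most $wp$ points; since $s\ge 1$ and $w<a$ we get $asp\ge ap > wp$, so $Q_s$ cannot be conjugate to a subgroup of $D$. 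Hence the block component of $H^{(a^n)}$ for $B_\gamma$ is purely projective. That observation makes your ``critical step'' vacuous: no non-projective summand lies in $B_\gamma$, so certainly none contributes $\chi^\lambda$ to the ordinary character.

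With that in hand, the rest collapses to a short argument identical in spirit to what you outline: the block component $W$ decomposes as $P^{\zeta_1}\oplus\cdots\oplus P^{\zeta_s}$, its ordinary character is $\sum_{\mu\in\mathcal{F}(\gamma)}(\sum_i d_{\mu\zeta_i})\chi^\mu$ and is a summand of $\phi^{(a^n)}$, maximality of $\lambda$ in $\mathcal{F}(\gamma)$ together with $d_{\mu\nu}\ne 0\Rightarrow\nu\trianglerighteq\mu$ forces some $\zeta_j=\lambda$, and then $\psi^\lambda$ being a summand of $\phi^{(a^n)}$ yields $d_{\mu\lambda}\le\langle\phi^{(a^n)},\chi^\mu\rangle$ for all $\mu$. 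As written, your proposal has a genuine gap: the final paragraph describes an intended analysis of $\chi^U$ for non-projective $U$ ``via the character-theoretic Green correspondence applied to $V\boxtimes Z$,'' but no such argument is carried out, and it is not needed once one uses the support comparison rather than the group-order comparison.
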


Theorem \ref{TTT1} allows us to detect new information on decomposition numbers from the study of the ordinary structure of the Foulkes character. 
In particular the study of the zero-multiplicity characters in the decomposition of $\phi^{(a^n)}$ leads to some new non-obvious zeros in certain columns of the decomposition matrix of $S_{an}$ (see Corollary \ref{x} and Example \ref{E:last}).

\vspace{.5cm}

The paper is structured as follows.
 The necessary background on the Brauer correspondence and a detailed description of the combinatorial structure of the Foulkes modules are given in Section $2$.
In Section $3$ we will prove Theorem \ref{T1}. The proof is split into a series of lemmas and propositions describing the structural properties of the Foulkes modules.
Finally in Section $4$ we will prove Theorem \ref{TTT1} and give some applications.


\section{Preliminaries}

In this section we briefly present the theoretical tools that we will extensively use to deduce our new results. A partition $\lambda$ of a natural number $n$ is a weakly decreasing sequence of positive numbers $\lambda=(\lambda_1,\lambda_2,\ldots,\lambda_k)$ such that $\sum_{i=1}^k\lambda_i=n$.
Given a natural number $n$ and a partition $\lambda$ of $n$ we will denote by $S^{\lambda}$ the Specht module labelled by $\lambda$ and by $\chi^\lambda$ the ordinary character afforded by
$S^\lambda$. This notation agrees with the notation of \cite{James} and we refer the reader to it for a comprehensive exposition of the general theory of the symmetric group representations.

\subsection{Brou\'e's correspondence}

As mentioned in the introduction we will study, in the next section, the vertices of a family of $p$-permutation modules. 
One of the most important techniques used will be the Brauer construction applied to $p$-permutation modules. 
Here we will summarize the main results of Brou\'e's paper \cite{Broueperm}. 

Let $\mathbb{F}$ be a field of prime characteristic $p$ and $G$ a finite group. An $\mathbb{F}G$-module $V$ is called a $p$-permutation module if for every $P\in \mathrm{Syl}_p(G)$ there exists a linear basis $\mathcal{B}_P$ of $V$ that is permuted by $P$. It is quite easy to derive from the definition that the direct sum and the tensor product of $p$-permutation modules are $p$-permutation modules; if $H\leq G$ then the restriction to $H$ of a $p$-permutation $\mathbb{F}G$-module is a $p$-permutation module as well as the induction from $H$ to $G$ of a $p$-permutation $\mathbb{F}H$-module; in conclusion every direct summand of a $p$-permutation module is a $p$-permutation module.
A complete characterization of such modules is contained in the following theorem.

\begin{teo}
An indecomposable $\mathbb{F}G$-module $V$ is a $p$-permutation module if and only if there exists $P\leq_p G$ such that $V\ |\ \mathbb{F}\uparrow_P^G$.
\end{teo}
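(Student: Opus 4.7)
The plan is to prove both directions separately, with the forward direction being essentially a triviality and the reverse direction relying on the standard averaging trick available when the relevant index is coprime to $p$.

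For the easy direction ($\Leftarrow$), I would observe that $\mathbb{F}\uparrow_P^G$ admits the set of cosets $G/P$ as an $\mathbb{F}$-basis, and this basis is permuted by all of $G$, so certainly by any Sylow $p$-subgroup. Hence $\mathbb{F}\uparrow_P^G$ is a $p$-permutation module, and since the paper has already noted that direct summands of $p$-permutation modules are $p$-permutation modules, any indecomposable summand $V$ is $p$-permutation. Note that we do not even need $P$ to be a $p$-subgroup here; the direction holds for arbitrary $P$.

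For the harder direction ($\Rightarrow$), suppose $V$ is indecomposable and $p$-permutation. Fix a Sylow $p$-subgroup $S\leq G$ and a basis $\mathcal{B}_S$ of $V$ permuted by $S$. Partitioning $\mathcal{B}_S$ into $S$-orbits and using that for each basis vector $v$ the orbit $Sv$ spans a submodule isomorphic to $\mathbb{F}\uparrow_{\mathrm{Stab}_S(v)}^S$, we obtain a decomposition
\[
V{\downarrow}_S \;\cong\; \bigoplus_{i} \mathbb{F}\uparrow_{P_i}^S,
\]
where each $P_i$ is a subgroup of $S$, hence a $p$-subgroup of $G$. Inducing back up,
\[
(V{\downarrow}_S){\uparrow}^G \;\cong\; \bigoplus_i \mathbb{F}\uparrow_{P_i}^G
\]
by transitivity of induction.

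The crucial step is now to show that $V$ is a direct summand of $(V{\downarrow}_S){\uparrow}^G$. Because $S$ is a Sylow $p$-subgroup, the index $[G:S]$ is coprime to $p$ and hence invertible in $\mathbb{F}$; the averaging map $v\otimes g\mapsto [G:S]^{-1}\,gv$ provides a splitting of the natural surjection $(V{\downarrow}_S){\uparrow}^G\twoheadrightarrow V$, which is the content of Higman's criterion applied to $S$. Given this, $V$ is a summand of $\bigoplus_i\mathbb{F}\uparrow_{P_i}^G$, and since $V$ is indecomposable, the Krull--Schmidt theorem yields an index $i$ such that $V\mid\mathbb{F}\uparrow_{P_i}^G$. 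Taking $P=P_i$ finishes the proof.

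The only nontrivial input is the Higman-type averaging argument ensuring that $V\mid (V{\downarrow}_S){\uparrow}^G$; everything else is formal manipulation of permutation bases and Krull--Schmidt. There is no real obstacle, but if one wanted to avoid citing Higman's criterion one could note that the composition of the restriction and the averaged induction is multiplication by $[G:S]$ on $V$, an isomorphism in $\mathbb{F}G$-modules, and then read off the splitting directly.
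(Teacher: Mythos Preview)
Your argument is correct and is essentially the standard proof of this characterization (see, e.g., Brou\'e's original paper). Note, however, that the paper does not actually prove this theorem: it is stated in Section~2.1 as a background result quoted from \cite{Broueperm}, so there is no ``paper's own proof'' to compare against.

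A couple of minor remarks on presentation. In the splitting step, the map you wrote, $v\otimes g\mapsto [G:S]^{-1}gv$, is (up to left/right conventions) the surjection itself rescaled, not a section of it; the section goes the other way, $v\mapsto [G:S]^{-1}\sum_{t} vt^{-1}\otimes t$ with $t$ ranging over a transversal for $S$ in $G$. Your final paragraph has the correct formulation: the composite of the unit and counit of the induction--restriction adjunctions is multiplication by $[G:S]$, which is invertible, so $V$ splits off $(V{\downarrow}_S){\uparrow}^G$. This is exactly the relative projectivity of every module with respect to a Sylow $p$-subgroup, and together with Krull--Schmidt it yields the conclusion. No substantive gap.
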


We recall now the definition and the first basic properties of the Brauer construction for $\mathbb{F}G$-modules. 
Given an $\mathbb{F}G$-module $V$ and $Q\leq_p G$ we denote by $V^Q$ the set of fixed elements $\{v\in V\ |\ vg=v\ \text{for all}\ g\in Q \}$. It is easy to see that $V^Q$ is an 
$\mathbb{F}N_G(Q)$-module on which $Q$ acts trivially. For $P$ a proper subgroup of $Q$, the relative trace map $\mathrm{Tr}_P^Q:V^P\rightarrow V^Q$ is the linear map defined by $$\mathrm{Tr}_P^Q(v)=\sum_{g\in Z}vg,$$
where $Z$ is a set of right coset representatives for $P$ in $Q$. It is easy to notice that the definition of the map does not depend on the choice of the set of representatives. We observe that also $$\mathrm{Tr}^Q(V):=\sum_{P<Q}\mathrm{Tr}_P^Q(V^P)$$ is an $\mathbb{F}N_G(Q)$-module on which $Q$ acts trivially. Therefore we can define the $\mathbb{F}(N_G(Q)/Q)$-module called the \textit{Brauer quotient} of $V$ with respect to $Q$ by $$V(Q)=V^Q/\mathrm{Tr}^Q(V).$$
For our scope it is very important to remark that if $V$ is an indecomposable $\mathbb{F}G$-module and $Q\leq_p G$ then $V(Q)\neq 0$ implies that $Q$ is contained in a vertex of $V$. Brou\'e proved in \cite{Broueperm} that the converse holds in the case of $p$-permutation modules.

\begin{teo}\label{BT2}
Let $V$ be an indecomposable $p$-permutation module and $Q$ be a vertex of $V$. Let $P$ be a $p$-subgroup of $G$, then $V(P)\neq 0$ if and only if $P\leq Q^g$ for some $g\in G$.
\end{teo}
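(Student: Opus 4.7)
The plan splits into the two implications of the equivalence.

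\textbf{Forward direction.} The implication $V(P)\neq 0 \Rightarrow P\leq Q^g$ for some $g\in G$ is essentially free from what has already been said: the remark recorded immediately before the theorem asserts that for any indecomposable $\mathbb{F}G$-module, $V(R)\neq 0$ forces $R$ to lie inside some vertex of $V$, and the vertices of $V$ form a single $G$-conjugacy class, one representative of which is $Q$.

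\textbf{Reverse direction, combinatorial reduction.} For the converse, after replacing $Q$ by a conjugate we may assume $P\leq Q$. Fix a Sylow $p$-subgroup $S$ of $G$ containing $Q$ and, using that $V$ is a $p$-permutation module, a basis $\mathcal{B}$ of $V$ permuted by $S$. For any $R\leq S$ the $R$-orbit sums on $\mathcal{B}$ furnish an $\mathbb{F}$-basis of $V^R$. A non-trivial $R$-orbit with stabilizer $R'<R$ contributes an orbit sum equal to $\mathrm{Tr}_{R'}^R(b)\in\mathrm{Tr}^R(V)$ for any orbit representative $b$, whereas $R$-fixed basis elements are linearly independent modulo $\mathrm{Tr}^R(V)$. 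Hence $V(R)\neq 0$ if and only if $\mathcal{B}^R\neq\emptyset$. Since $P\leq Q$ forces $\mathcal{B}^Q\subseteq\mathcal{B}^P$, the problem reduces to showing $V(Q)\neq 0$, i.e.\ to the existence of a $Q$-fixed element in $\mathcal{B}$.

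\textbf{The heart of the matter.} Since $V$ is a $p$-permutation module of vertex $Q$, the previous theorem yields $V\mid\mathbb{F}\uparrow_Q^G$; fix a primitive idempotent $e\in E:=\mathrm{End}_{\mathbb{F}G}(\mathbb{F}\uparrow_Q^G)$ with $V\cong e\cdot\mathbb{F}\uparrow_Q^G$. Functoriality and additivity of the Brauer construction produce a ring homomorphism
\[
\mathrm{Br}_Q\colon E^Q \longrightarrow \mathrm{End}_{\mathbb{F}[N_G(Q)/Q]}\bigl((\mathbb{F}\uparrow_Q^G)(Q)\bigr)
\]
under which $V(Q)=\mathrm{Br}_Q(e)\cdot(\mathbb{F}\uparrow_Q^G)(Q)$. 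An inspection of the $Q$-fixed cosets $gQ$ in $G/Q$ (exactly those with $g\in N_G(Q)$) identifies $(\mathbb{F}\uparrow_Q^G)(Q)$ with the regular $\mathbb{F}[N_G(Q)/Q]$-module, which is non-zero. Therefore it suffices to show $\mathrm{Br}_Q(e)\neq 0$.

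\textbf{The main obstacle.} The crux of the proof lies in identifying $\ker(\mathrm{Br}_Q)$ with the ideal $\sum_{R<Q}\mathrm{Tr}_R^Q(E^R)$. Granting this, if $\mathrm{Br}_Q(e)=0$ then $e$ is expressible as a sum of relative traces from proper subgroups of $Q$, and Higman's criterion forces $V$ to be relatively $R$-projective for some $R<Q$, contradicting the choice of $Q$ as a vertex of $V$. Pinning down $\ker(\mathrm{Br}_Q)$ in this way is the substantial technical content of \cite{Broueperm}; one would either reproduce it via an induction on $|Q|$ combined with a Mackey decomposition of $(\mathbb{F}\uparrow_Q^G)\downarrow_Q$, or simply invoke it.
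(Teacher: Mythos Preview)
The paper does not give a proof of this theorem; it is quoted from Brou\'e \cite{Broueperm} as part of the background in Section~2.1, with no argument supplied. There is therefore nothing in the paper to compare your proposal against.

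That said, your sketch is sound and tracks Brou\'e's own approach. The forward implication is indeed covered by the remark immediately preceding the theorem. For the converse, the reduction to $V(Q)\neq 0$ via the permutation basis is exactly the computation the paper carries out (independently of this theorem) in the paragraph leading to Corollary~\ref{cor:Brauer}. Your final step---passing to the Brauer map on the endomorphism ring of $\mathbb{F}\uparrow_Q^G$, identifying its kernel with the span of proper relative traces, and concluding via Higman's criterion---is precisely the substantial content of \cite{Broueperm}, as you yourself note; so your proposal ultimately rests on the same citation the paper makes, just with more of the architecture made explicit.

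One small quibble: you write that ``the previous theorem yields $V\mid\mathbb{F}\uparrow_Q^G$'', but Theorem~2.1 as stated only gives $V\mid\mathbb{F}\uparrow_P^G$ for \emph{some} $p$-subgroup $P$, not necessarily the vertex $Q$. To obtain the statement with $P=Q$ you need the additional (standard) fact that indecomposable $p$-permutation modules have trivial source, so that $V$ is a summand of $\mathbb{F}_Q\uparrow_Q^G=\mathbb{F}\uparrow_Q^G$. This is implicit in the characterisation, but worth saying.
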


If $V$ is an $\mathbb{F}G$-module with $p$-permutation basis $\mathcal{B}$ with respect 
to a Sylow $p$-subgroup $P$ of $G$ and $R \le P$, then 
taking for each orbit of $R$ on $\mathcal{B}$ the sum
of the vectors in that orbit, we obtain a basis for $V^R$.
The sums over vectors lying in orbits of size $p$ or more
are relative traces from proper subgroups of~$R$, and so $V(R)$ is isomorphic to the $\mathbb{F}$-span of
$$\mathcal{B}(R):= \{ v \in \mathcal{B} : \text{$vg = v$ for all $g\in R$} \}.$$
Thus Theorem~\ref{BT2} has the following
corollary,

\begin{cor}\label{cor:Brauer}
Let $V$ be a $p$-permutation $\mathbb{F}G$-module
with 
$p$-permutation basis
$\mathcal{B}$ with respect to a Sylow $p$-subgroup $P$ of $G$.
Let $R \leq P$. Then the $\mathbb{F}N_G(R)$-module $V(R)$ is equal to 
$\langle \mathcal{B}(R) \rangle$ 
and $V$ has an indecomposable summand with a
vertex containing~$R$ if and only if $\mathcal{B}(R) \not= \varnothing$.
\end{cor}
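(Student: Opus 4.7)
The plan is to make the sketch preceding the corollary rigorous, then combine it with Theorem~\ref{BT2} applied summand-wise. First I would decompose $\mathcal{B}$ into $R$-orbits (possible since $R \le P$ acts on $\mathcal{B}$) and note that the orbit sums $s_\mathcal{O} = \sum_{v \in \mathcal{O}} v$ form an $\mathbb{F}$-basis of $V^R$. Since $R$ is a $p$-group, orbit sizes are powers of $p$: singleton orbits correspond exactly to the elements of $\mathcal{B}(R)$, while all other orbits have size at least $p$.

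Next I would identify $\mathrm{Tr}^R(V)$ with the span of the non-singleton orbit sums. The easy direction is clear: for any non-singleton orbit $\mathcal{O}$ with $v \in \mathcal{O}$ and $R_v = \mathrm{Stab}_R(v) < R$, we have $s_\mathcal{O} = \mathrm{Tr}_{R_v}^R(v) \in \mathrm{Tr}^R(V)$. For the converse, take $Q < R$ and $w \in V^Q$; expand $w$ in the $Q$-orbit-sum basis of $V^Q$ and apply transitivity of the trace to each basis element: for $v \in \mathcal{B}$ with $R$-stabilizer $R_v$ and $Q_v = Q \cap R_v$,
\[
\mathrm{Tr}_{Q_v}^R(v) \;=\; \mathrm{Tr}_{R_v}^R \bigl( \mathrm{Tr}_{Q_v}^{R_v}(v) \bigr) \;=\; [R_v : Q_v] \, s_{\mathcal{O}(v)},
\]
where $\mathcal{O}(v)$ is the $R$-orbit of $v$ (using that $v$ is fixed by $R_v$). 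If $R_v \not\le Q$ then $Q_v < R_v$, so $[R_v : Q_v]$ is a positive power of $p$ and the term vanishes in $\mathbb{F}$; if $R_v \le Q$ then $R_v \le Q < R$ forces $|\mathcal{O}(v)| \ge p$, so the term is a non-singleton orbit sum. Thus $V^R = \langle \mathcal{B}(R) \rangle \oplus \mathrm{Tr}^R(V)$ as $\mathbb{F}$-vector spaces, and the quotient $V(R)$ is canonically identified with $\langle \mathcal{B}(R) \rangle$ as an $\mathbb{F}N_G(R)$-module (the $N_G(R)$-action being transported along the identification).

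For the second statement I would take a Krull--Schmidt decomposition $V = \bigoplus_i V_i$ (each summand a $p$-permutation module) and use additivity of the Brauer construction to obtain $V(R) = \bigoplus_i V_i(R)$. By Theorem~\ref{BT2}, $V_i(R) \ne 0$ iff $R$ is contained in a conjugate of a vertex of $V_i$, equivalently iff $V_i$ has a vertex containing~$R$. Combining with the first part, $\mathcal{B}(R) \ne \varnothing \iff V(R) \ne 0 \iff$ some $V_i$ has a vertex containing~$R$.

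The main obstacle is the computation identifying $\mathrm{Tr}^R(V)$ with the span of non-singleton orbit sums --- specifically the reverse inclusion, where the decisive point is that the index $[R_v : Q_v]$ is either $1$ (producing a genuine non-singleton orbit sum because $Q < R$ forces $|\mathcal{O}(v)| \ge p$) or a positive power of $p$ (vanishing in $\mathbb{F}$). Once this dichotomy is in hand, the rest is formal.
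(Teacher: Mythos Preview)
Your proposal is correct and follows exactly the approach sketched in the paragraph preceding the corollary in the paper: orbit sums give a basis of $V^R$, the non-singleton orbit sums span $\mathrm{Tr}^R(V)$, and the second assertion is then immediate from Theorem~\ref{BT2} applied to the indecomposable summands. The paper does not supply more detail than that sketch, so your rigorous verification of the identification $\mathrm{Tr}^R(V) = \langle s_{\mathcal{O}} : |\mathcal{O}| \ge p \rangle$ via the index dichotomy is a faithful (and careful) expansion of what the paper leaves to the reader.
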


The next result \cite[3.4]{Broueperm} explains what is now known as Brou\'e's correspondence.

\begin{teo}\label{BC1}
An indecomposable $p$-permutation module $V$ has vertex $Q$ if and only if $V(Q)$ is a projective $\mathbb{F}(N_G(Q)/Q)$-module. Furthermore
\begin{itemize}
\item The Brauer map sending $V$ to $V(Q)$ is a
bijection between the set of indecomposable $p$-permutation 
$\mathbb{F}G$-modules with vertex $Q$ and the set of indecomposable 
projective $\mathbb{F}(N_G(Q)/Q)$-modules. Regarded as an $\mathbb{F}N_{G}(Q)$-module, $V(Q)$ is the
Green correspondent of $V$.
\item Let $V$ be a $p$-permutation $\mathbb{F}G$-module and $E$ an indecomposable projective $\mathbb{F}(N_G(Q)/Q)$-module. Then $E$ is a direct summand of $V(Q)$ if and only if its correspondent $U$ $(\text{i.e the $\mathbb{F}G$-module $U$ such that}\ U(Q)\cong E)$ is a direct summand of $V$.
\end{itemize}
\end{teo}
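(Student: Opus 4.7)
The plan is to prove the equivalence "$V$ has vertex $Q$ $\Leftrightarrow$ $V(Q)$ is projective" first, and then derive bijectivity of the Brauer map, its identification with the Green correspondence, and the second bullet from this equivalence together with additivity of the Brauer construction.

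For the forward direction, assume $V$ is indecomposable with vertex $Q$, so $V$ is a direct summand of $\mathbb{F}\!\uparrow_Q^G$. The canonical $p$-permutation basis of $\mathbb{F}\!\uparrow_Q^G$ is the set of cosets $\{Qg : g \in G\}$; the coset $Qg$ is $Q$-fixed iff $g^{-1}Qg \subseteq Q$, i.e., by order comparison, iff $g \in N_G(Q)$. Hence $(\mathbb{F}\!\uparrow_Q^G)(Q) \cong \mathbb{F}[N_G(Q)/Q]$, the regular representation, which is projective. Additivity of the Brauer construction then forces $V(Q)$ to be a summand of this projective module, hence projective itself.

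For the reverse direction I would use two ingredients: (i) any projective $\mathbb{F}H$-module has zero Brauer quotient at every non-trivial $p$-subgroup of $H$, which one checks directly on $\mathbb{F}[H]$ by noting that every $K$-orbit sum is a relative trace from the trivial subgroup, so $\mathbb{F}[H]^K = \mathrm{Tr}_1^K(\mathbb{F}[H])$; and (ii) for a $p$-permutation module and $Q \leq R$ with $R \leq N_G(Q)$, choosing a $p$-permutation basis $\mathcal{B}$ of $V$ with respect to a Sylow $p$-subgroup of $G$ containing $R$, both $(V(Q))(R/Q)$ and $V(R)$ have $\mathcal{B}(R)$ as $\mathbb{F}$-basis (an element of $\mathcal{B}$ is fixed by $R$ iff it is fixed by $Q$ and further by $R/Q$), so one is zero iff the other is. Now suppose $V(Q)$ is nonzero and projective and let $R$ be a vertex of $V$; by Theorem~\ref{BT2} we may assume $Q \leq R$. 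If $Q < R$, the standard normaliser property of $p$-groups gives $Q < N_R(Q) \leq N_G(Q)$, so $N_R(Q)/Q$ is a nontrivial $p$-subgroup of $N_G(Q)/Q$. By (ii), $(V(Q))(N_R(Q)/Q) \neq 0$, since $V(N_R(Q)) \neq 0$ by Theorem~\ref{BT2}. But (i) forces $(V(Q))(N_R(Q)/Q) = 0$, a contradiction. Therefore $R = Q$.

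Finally, additivity of the Brauer construction gives the second bullet at once: a decomposition $V = \bigoplus V_i$ into indecomposables produces $V(Q) = \bigoplus V_i(Q)$, in which each summand $V_i(Q)$ is either zero (when $Q$ is not contained in a vertex of $V_i$) or an indecomposable projective corresponding to a $V_i$ with vertex $Q$. This also shows injectivity of the Brauer map on indecomposables with vertex $Q$. For surjectivity and the Green-correspondence identification, one takes any indecomposable projective $E$ over $\mathbb{F}(N_G(Q)/Q)$, inflates it to an indecomposable $\mathbb{F}N_G(Q)$-module with vertex $Q$, and applies classical Green correspondence to obtain an indecomposable $\mathbb{F}G$-module $V$ with vertex $Q$ whose Brauer quotient at $Q$ recovers $E$ by the forward-direction computation; the inflation of $V(Q)$ to $N_G(Q)$ coincides with the Green correspondent by construction. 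The main obstacle is establishing the transitivity in (ii) cleanly and verifying (i) for arbitrary projectives; once these are in place, the normaliser-tower argument isolates the vertex uniquely.
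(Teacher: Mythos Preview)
The paper does not prove this theorem: it is quoted as a background result from Brou\'e \cite[3.4]{Broueperm} with no argument supplied. So there is no proof in the paper to compare your proposal against.

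On its own merits, your argument for the main equivalence is sound and is essentially Brou\'e's. The forward direction (compute $(\mathbb{F}\!\uparrow_Q^G)(Q)$ as the regular module of $N_G(Q)/Q$ and use additivity) is correct. The reverse direction via the normaliser--tower trick---using (i) that projectives have zero Brauer quotient at every nontrivial $p$-subgroup and (ii) the transitivity $(V(Q))(R/Q)\cong V(R)$ for $Q\trianglelefteq R$---is also correct; your ingredient (ii) is exactly what the paper records later as Lemma~\ref{LDB}.

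There is, however, a real gap in your treatment of the two bullets. You assert that when $V_i$ has vertex $Q$ the module $V_i(Q)$ is an \emph{indecomposable} projective, but your forward direction only yields projectivity, not indecomposability. Without this, additivity does not give injectivity of the Brauer map, nor the second bullet, as you claim. Your surjectivity paragraph is then circular: you produce $V$ as the Green correspondent of the inflation of $E$ and say that ``$V(Q)$ recovers $E$ by the forward-direction computation'', but the forward direction does not tell you \emph{which} projective $V(Q)$ is. The standard repair is to prove first, for any indecomposable $p$-permutation $V$ with vertex $Q$, that the inflation of $V(Q)$ to $N_G(Q)$ \emph{is} the Green correspondent of $V$ (Brou\'e does this by exhibiting $V(Q)$ as a direct summand of $V\!\downarrow_{N_G(Q)}$ with vertex $Q$ and invoking the uniqueness clause in Green's theorem). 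Indecomposability of $V(Q)$, injectivity, surjectivity, and the second bullet then all follow immediately from that identification.
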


Some important consequences that we will use extensively in the next sections are stated below.

\begin{cor}
Let $U$ be a $p$-permutation $\mathbb{F}G$-module and let $Q$ be a $p$-subgroup of $G$. The Brauer correspondent $U(Q)$ of $U$ is a $p$-permutation
$\mathbb{F}N_G(Q)$-module.
\end{cor}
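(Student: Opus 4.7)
The plan is to produce an explicit $p$-permutation basis of $U(Q)$ with respect to some Sylow $p$-subgroup of $N_G(Q)$, by transporting a suitable basis of $U$ through Corollary \ref{cor:Brauer}.

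First I would choose a Sylow $p$-subgroup $P_N$ of $N_G(Q)$. Since $Q$ is a normal $p$-subgroup of $N_G(Q)$, we have $Q \leq P_N$; and since $P_N$ is itself a $p$-subgroup of $G$, it is contained in some Sylow $p$-subgroup $P$ of $G$. Because $U$ is a $p$-permutation $\mathbb{F}G$-module, it admits a $P$-stable basis $\mathcal{B}$, and Corollary \ref{cor:Brauer} then identifies $U(Q)$ with the $\mathbb{F}$-span of $\mathcal{B}(Q) = \{v \in \mathcal{B} : vg = v \textrm{ for all } g \in Q\}$, so $\mathcal{B}(Q)$ provides a basis of the Brauer quotient.

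It remains to check that $P_N$ permutes $\mathcal{B}(Q)$. Given $v \in \mathcal{B}(Q)$ and $x \in P_N \leq P$, the element $vx$ lies in $\mathcal{B}$ because $\mathcal{B}$ is $P$-stable. For any $g \in Q$ one has $xgx^{-1} \in Q$ since $x$ normalizes $Q$, and consequently $(vx)g = v(xgx^{-1})x = vx$, so $vx \in \mathcal{B}(Q)$. Thus $\mathcal{B}(Q)$ is a $P_N$-stable basis of $U(Q)$, which is precisely the $p$-permutation property for the $\mathbb{F}N_G(Q)$-module $U(Q)$. The only conceptual step, and essentially the sole obstacle, is arranging the two Sylow subgroups compatibly so that $Q \leq P_N \leq P$; once this is set up, a single basis $\mathcal{B}$ simultaneously witnesses the $p$-permutation structure of $U$ and that of $U(Q)$, and the verification reduces to the one-line computation above using $x \in N_G(Q)$.
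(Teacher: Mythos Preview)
Your proof is correct and follows essentially the same approach as the paper: choose a Sylow $p$-subgroup $P_N$ of $N_G(Q)$, enlarge it to a Sylow $P$ of $G$, take a $P$-stable basis $\mathcal{B}$ of $U$, and use Corollary~\ref{cor:Brauer} to see that $\mathcal{B}(Q)$ is a $P_N$-stable basis of $U(Q)$. You in fact spell out the verification that $P_N$ permutes $\mathcal{B}(Q)$, which the paper leaves as ``easy to observe''; the only thing the paper adds that you omit is the routine passage from one Sylow $p$-subgroup of $N_G(Q)$ to all of them via conjugation, needed to match the paper's stated definition of $p$-permutation module.
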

\begin{proof}
Let $P'$ be a Sylow $p$-subgroup of $N_G(Q)$ and let $P$ be a Sylow $p$-subgroup of $G$ containing $P'$. 
Denote by $\mathcal{B}_P$ a $p$-permutation basis of $U$ with respect to $P$. By Corollary \ref{cor:Brauer} we have that the $\mathbb{F}N_G(Q)$-module
$U(Q)$ has linear basis $\mathcal{B}_P(Q)$. It is easy to observe that $\mathcal{B}_P(Q)$ is a $p$-permutation basis with respect to $P'$.
Let $P''$ be another Sylow $p$-subgroup of $N_{G}(Q)$ and let $g\in N_G(Q)$ such that $P''=P'^g$. 
Then we have that $$\mathcal{B}'':=\{xg\ |\ x\in\mathcal{B}_P(Q) \}$$ is a $p$-permutation basis of $U(Q)$ with respect to $P''$.
This completes the proof.
\end{proof}

\begin{cor}\label{BC3}
Let $G$ and $H$ be two finite groups and let $C$ be a subgroup of $G$.
Let $U$ be an indecomposable $p$-permutation $\mathbb{F}G$-module and $V$ be a $p$-permutation $\mathbb{F}H$-module. Then 
\begin{itemize}
\item If $U\downarrow_C=W_1\oplus\cdots\oplus W_k$, then there exist a vertex $R$ of $U$ and vertices $Q_1,\ldots,Q_k$ of $W_1,\ldots,W_k$ respectively, such that $Q_i\leq R$ for all $i\in\{1,2,\ldots,k\}$.
\item The indecomposable $\mathbb{F}(G\times H)$-module $U\boxtimes V$ has a vertex containing both $Q$ and $P$, vertices of $U$ and $V$ respectively. 
\end{itemize}
\end{cor}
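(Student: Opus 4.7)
The plan is to deduce both statements from Theorem \ref{BT2}, which characterizes the vertices of an indecomposable $p$-permutation module via the Brauer quotient.

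For part (i), I would proceed as follows. Fix a vertex $R$ of $U$, and for each $i$ choose a vertex $Q_i$ of $W_i$. Because the Brauer construction at the $p$-subgroup $Q_i$ depends only on the action of $Q_i$ on the underlying vector space of $U$, there is an identification $(U\downarrow_C)(Q_i)=U(Q_i)\downarrow_{N_C(Q_i)/Q_i}$ of $\mathbb{F}(N_C(Q_i)/Q_i)$-modules, and since $W_i$ is a direct summand of $U\downarrow_C$ this makes $W_i(Q_i)$ a direct summand of $U(Q_i)\downarrow_{N_C(Q_i)/Q_i}$. The assumption that $Q_i$ is a vertex of $W_i$ gives $W_i(Q_i)\neq 0$, and hence $U(Q_i)\neq 0$. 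Applying Theorem \ref{BT2} to $U$ then yields an element $g_i\in G$ with $Q_i\leq R^{g_i}$. The last step is to replace the $Q_i$'s and $R$ by suitable conjugates, within the $C$-conjugacy class of vertices of each $W_i$ and the $G$-conjugacy class of vertices of $U$ respectively, so that all $Q_i$ sit inside a single vertex of $U$.

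For part (ii), I would work directly with $p$-permutation bases and Corollary \ref{cor:Brauer}. Choose $p$-permutation bases $\mathcal{B}_U$ and $\mathcal{B}_V$ of $U$ and $V$ with respect to Sylow $p$-subgroups of $G$ and $H$ that contain $Q$ and $P$ respectively; then the Cartesian product $\mathcal{B}_U\times\mathcal{B}_V$ is a $p$-permutation basis of $U\boxtimes V$ with respect to the product Sylow $p$-subgroup of $G\times H$. Regarding $Q\times P$ as a subgroup of $G\times H$, an element of $\mathcal{B}_U\times\mathcal{B}_V$ is fixed by $Q\times P$ precisely when its first entry lies in $\mathcal{B}_U(Q)$ and its second in $\mathcal{B}_V(P)$. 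Because $Q$ and $P$ are vertices of $U$ and $V$, Corollary \ref{cor:Brauer} gives $\mathcal{B}_U(Q)\neq\varnothing$ and $\mathcal{B}_V(P)\neq\varnothing$, so $(\mathcal{B}_U\times\mathcal{B}_V)(Q\times P)\neq\varnothing$. Theorem \ref{BT2} then forces $Q\times P$ to be contained in a vertex of $U\boxtimes V$.

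The main obstacle lies in the final step of part (i): Theorem \ref{BT2} yields each $Q_i$ only up to containment in \emph{some} $G$-conjugate $R^{g_i}$ of $R$, and there is no a priori reason for these conjugates to coincide as $i$ varies. Arranging the simultaneous containment in one common vertex requires using the freedom to conjugate both $R$ and the $Q_i$'s within their respective conjugacy classes; in practice this can be handled by fixing a $p$-permutation basis of $U$ adapted to a Sylow $p$-subgroup of $G$ and tracking the relevant fixed-point sets explicitly.
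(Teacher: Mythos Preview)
The paper does not prove this corollary: it is simply listed among ``some important consequences'' of Brou\'e's theory and left without argument. So there is no proof to compare against, and your proposal is really supplying the missing details.

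Your treatment of part~(ii) via product $p$-permutation bases is correct and complete, and is in the same spirit as the paper's proof of Lemma~\ref{LDB2}.

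For part~(i) your Brauer-quotient argument is also correct as far as it goes: it shows that each vertex $Q_i$ of $W_i$ lies in \emph{some} $G$-conjugate $R^{g_i}$ of $R$, and you are right to flag that the $g_i$ need not coincide. You do not actually close this gap---``in practice this can be handled by fixing a $p$-permutation basis'' is a promissory note rather than an argument, and it is not clear how to make it work, since the $S$-stabilizers of elements of a $p$-permutation basis of $U$ (for a Sylow $S\ge R$) need not all sit inside $R$ itself. However, the only place the paper invokes part~(i) is in the proof of Corollary~\ref{C1}, and there a single summand (equivalently, the per-$i$ statement ``$Q_i$ is contained in some vertex of $U$'') is all that is used. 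Your argument establishes that cleanly, so the gap you identify, while genuine for the literal ``single $R$'' reading of the statement, does not affect anything the paper actually needs.
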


\begin{lemma}\label{Sum}
Let $U$ and $V$ be $p$-permutation $\mathbb{F}G$-modules and let $P$ be a $p$-subgroup of $G$, then $$(U\oplus V)(P)\cong U(P)\oplus V(P)$$ as 
$\mathbb{F}_p(N_G(P))$-modules.
\end{lemma}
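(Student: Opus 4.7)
The plan is to exploit that the Brauer construction is built from two operations, taking $P$-fixed points and taking relative traces, each of which commutes with finite direct sums of $\mathbb{F}G$-modules in an $N_G(P)$-equivariant way. So the proof reduces to checking these two compatibilities and then passing to the quotient.

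First I would verify that $(U\oplus V)^P = U^P\oplus V^P$. This is immediate from the componentwise description of the $G$-action: $(u,v)g = (ug,vg)$, so $(u,v)$ is fixed by every $g\in P$ if and only if $u\in U^P$ and $v\in V^P$. The identification is clearly $N_G(P)$-equivariant, since $N_G(P)$ acts diagonally on $U\oplus V$ and the decomposition is $\mathbb{F}G$-linear.

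Next I would check that the relative trace map respects the decomposition. For any $R<P$ and any set $Z$ of right coset representatives of $R$ in $P$, and for $(u,v)\in U^R\oplus V^R$, we have
\[
\mathrm{Tr}_R^P(u,v)=\sum_{g\in Z}(ug,vg)=\Bigl(\sum_{g\in Z}ug,\,\sum_{g\in Z}vg\Bigr)=\bigl(\mathrm{Tr}_R^P(u),\mathrm{Tr}_R^P(v)\bigr).
\]
Summing over all proper subgroups $R<P$ gives $\mathrm{Tr}^P(U\oplus V)=\mathrm{Tr}^P(U)\oplus\mathrm{Tr}^P(V)$ as $\mathbb{F}N_G(P)$-submodules of $(U\oplus V)^P$.

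Finally I would take the quotient: using the standard isomorphism $(A\oplus B)/(A'\oplus B')\cong A/A'\oplus B/B'$ for submodules $A'\leq A$ and $B'\leq B$, I obtain
\[
(U\oplus V)(P)=\frac{U^P\oplus V^P}{\mathrm{Tr}^P(U)\oplus\mathrm{Tr}^P(V)}\cong \frac{U^P}{\mathrm{Tr}^P(U)}\oplus\frac{V^P}{\mathrm{Tr}^P(V)}=U(P)\oplus V(P),
\]
and the $Q$-trivial $N_G(P)$-action descends compatibly, giving the isomorphism as $\mathbb{F}N_G(P)/P$-modules (hence in particular as $\mathbb{F}N_G(P)$-modules, which is what is claimed). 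There is no real obstacle here; the content of the statement is the additivity of a functor built out of two additive constructions, and the only thing to be careful about is that every identification in sight is equivariant for $N_G(P)$, which it is because the projections and inclusions of the direct sum are $\mathbb{F}G$-linear.
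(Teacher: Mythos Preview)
Your argument is correct. The paper itself states this lemma without proof, so there is nothing to compare against directly; your proof fills the gap cleanly by working straight from the definition of the Brauer quotient and checking that fixed points and relative traces both respect the direct sum decomposition in an $N_G(P)$-equivariant way. Note that your argument in fact shows additivity of the Brauer construction for \emph{arbitrary} $\mathbb{F}G$-modules, not just $p$-permutation ones, so the hypothesis in the statement is not used.

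If one wanted to use the $p$-permutation hypothesis, the paper's Corollary~\ref{cor:Brauer} gives a one-line alternative: choose $p$-permutation bases $\mathcal{B}_U$ and $\mathcal{B}_V$ for $U$ and $V$ with respect to a Sylow $p$-subgroup containing $P$; then $\mathcal{B}_U\cup\mathcal{B}_V$ is a $p$-permutation basis for $U\oplus V$, and $(\mathcal{B}_U\cup\mathcal{B}_V)(P)=\mathcal{B}_U(P)\cup\mathcal{B}_V(P)$, which immediately yields the desired splitting. Either route is fine; yours is more general, the basis argument is shorter in this setting. One small typo: in your final paragraph ``$Q$-trivial'' should read ``$P$-trivial''.
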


The following lemma is stated in \cite[Lemma 4.7]{GW}; we include the proof here to make this article more self contained.

\begin{lemma}\label{LDB}
Let $Q$ and $R$ be $p$-subgroups of a finite group $G$ and let 
$U$ be a $p$-permutation $\mathbb{F}G$-module. Let $K = N_G(R)$.
If $R$ is normal in $Q$ then the Brauer correspondents $U(Q)$ and 
$\bigl( U(R) \bigr)(Q)$ are isomorphic as $\mathbb{F}N_{K}(Q)$-modules.
\end{lemma}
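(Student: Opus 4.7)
The plan is to use Corollary \ref{cor:Brauer} to identify both Brauer quotients with explicit bases drawn from a single $p$-permutation basis of $U$, observe that these bases coincide as sets, and check that the common basis carries the same $N_K(Q)$-action in both modules.

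Since $R \trianglelefteq Q$ we have $R \leq Q$. I would choose a Sylow $p$-subgroup $P$ of $G$ containing $Q$, and hence $R$, and fix a $p$-permutation basis $\mathcal{B}$ of $U$ with respect to $P$. By Corollary \ref{cor:Brauer}, $U(Q) = \langle \mathcal{B}(Q) \rangle$. To apply the corollary to the $\mathbb{F}K$-module $U(R)$, observe that $Q$ permutes $\mathcal{B}(R)$: for $v \in \mathcal{B}(R)$ and $q \in Q$ the vector $vq$ lies in $\mathcal{B}$, and for any $r \in R$ we have $(vq)r = v(qrq^{-1})q = vq$ because $qrq^{-1} \in R$. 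Extending $N_P(R)$ to a Sylow $p$-subgroup of $K$ containing $Q$, the orbit argument used earlier in this section shows $\mathcal{B}(R)$ is a $p$-permutation basis of $U(R)$ for this Sylow subgroup, so Corollary \ref{cor:Brauer} yields $(U(R))(Q) = \langle \mathcal{B}(R)(Q) \rangle$. The clinching combinatorial observation is that a basis vector fixed by $Q$ is automatically fixed by $R \leq Q$, whence $\mathcal{B}(R)(Q) = \mathcal{B}(Q)$.

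It remains to check that the tautological bijection between bases extends to an isomorphism of $\mathbb{F}N_K(Q)$-modules. For $v \in \mathcal{B}(Q)$ and $g \in N_K(Q)$, expand $vg = \sum_{b \in \mathcal{B}} c_b b$ in $U$; since $vg \in U^Q$, the coefficients $c_b$ are constant on $Q$-orbits in $\mathcal{B}$, and each orbit of size greater than one contributes a relative trace from a proper subgroup of $Q$, hence vanishes in $U(Q)$, leaving the image $\sum_{b \in \mathcal{B}(Q)} c_b b$. Going instead through $U(R)$: $R$-orbits of size greater than one vanish modulo $\mathrm{Tr}^R(U)$, so in $U(R)$ the element $vg$ equals $\sum_{b \in \mathcal{B}(R)} c_b b$; then modulo $\mathrm{Tr}^Q$ in $U(R)$ only $\mathcal{B}(R)(Q) = \mathcal{B}(Q)$ survives, with the same coefficients. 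Both paths deliver $\sum_{b \in \mathcal{B}(Q)} c_b b$, so the module structures agree.

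The main obstacle is the bookkeeping in this last step: one must track which orbits on $\mathcal{B}$ collapse at each stage of the iterated quotient and verify that the surviving coefficients match those from the single quotient $U \to U(Q)$. Conceptually the only input required is that $R$-orbits refine $Q$-orbits on $\mathcal{B}$ — immediate from $R \leq Q$ — so that the two-step trace quotient cannot kill anything that the one-step quotient keeps, nor retain anything it discards.
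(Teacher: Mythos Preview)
Your approach coincides with the paper's: both Brauer quotients are identified with $\langle \mathcal{B}(Q) \rangle$ via Corollary~\ref{cor:Brauer} and the observation $(\mathcal{B}(R))(Q) = \mathcal{B}(Q)$. There is, however, one slip. Having chosen $P \in \mathrm{Syl}_p(G)$, you extend $N_P(R)$ to a Sylow $p$-subgroup $P'$ of $K$ and assert that $\mathcal{B}(R)$ is a $p$-permutation basis of $U(R)$ with respect to $P'$. This is unjustified: elements of $P' \setminus N_P(R)$ need not permute $\mathcal{B}$ at all (they only permute some conjugate basis), so they need not permute $\mathcal{B}(R)$. The paper avoids this by taking $P$ to be a Sylow $p$-subgroup of $N_G(R)$ containing $Q$ from the outset; then $P$ itself normalizes $R$, hence permutes $\mathcal{B}(R)$, and Corollary~\ref{cor:Brauer} applies to $U(R)$ with this same $P$ (note $K \cap P = P$). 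Alternatively, you can bypass the extension entirely: the orbit-sum argument preceding Corollary~\ref{cor:Brauer} needs only a basis permuted by $Q$, and you have already verified that $Q$ permutes $\mathcal{B}(R)$.

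Your final coefficient-tracking paragraph, verifying that the $N_K(Q)$-actions agree on $\langle \mathcal{B}(Q)\rangle$ whether one passes through $U(R)$ or not, is correct and more explicit than the paper, which leaves this compatibility implicit in the basis identifications.
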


\begin{proof}
Let $P$ be a Sylow $p$-subgroup of $N_G(R)$ containing $Q$
and let  $\mathcal{B}$ be a $p$-permutation basis for $U$ with respect to $P$.
By Corollary~\ref{cor:Brauer} we have $U(Q) = \langle \mathcal{B}(Q) \rangle$ as an 
$\mathbb{F}N_G(Q)$-module. 
In particular 
$$U(Q)\big\downarrow_{N_K(Q)}\! {}={} \langle \mathcal{B}(Q) \rangle$$
as an $\mathbb{F}N_{K}(Q)$-module.
On the other hand $U(R) = \langle \mathcal{B}(R) \rangle$ as an $\mathbb{F}N_G(R)$-module. Now $\mathcal{B}(R)$ is a
$p$-permutation basis for~$U(R)$ with respect to $K \cap P$.
Since this subgroup contains $Q$ we have $\bigl( U(R) \bigr) (Q) =
\langle \mathcal{B}(R) \rangle (Q) = \langle (\mathcal{B}(R))(Q) \rangle
= \langle \mathcal{B}(Q) \rangle$, as $\mathbb{F}N_{K}(Q)$-modules, as required.
\end{proof}

\begin{lemma}\label{LDB2}
Let $G$ and $G'$ be finite groups and let $U$ and $U'$ be $p$-permutation
modules for $\mathbb{F}G$ and $\mathbb{F}G'$, respectively. 
If $Q \le G$ is a $p$-subgroup then $$(U \boxtimes U')(Q) = U(Q) \boxtimes U',$$
where on the left-hand side
$Q$ is regarded as a subgroup of $G \times G'$ in the obvious way.
\end{lemma}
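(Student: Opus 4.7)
The plan is to apply Corollary~\ref{cor:Brauer} directly, reducing the identity to a straightforward statement about $Q$-fixed basis vectors. First I would choose a Sylow $p$-subgroup $P$ of $G$ containing $Q$ and any Sylow $p$-subgroup $P'$ of $G'$, then pick $p$-permutation bases $\mathcal{B}$ of $U$ with respect to $P$ and $\mathcal{B}'$ of $U'$ with respect to $P'$. The set of elementary tensors $\mathcal{B}\otimes\mathcal{B}' = \{b\otimes b' : b \in \mathcal{B},\, b' \in \mathcal{B}'\}$ is then a linear basis of $U \boxtimes U'$ that is permuted by the Sylow $p$-subgroup $P \times P'$ of $G \times G'$, so it is a $p$-permutation basis.

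Next, viewing $Q$ as the subgroup $Q \times \{1\}$ of $G \times G'$, the action on an elementary tensor is $(b \otimes b')(g,1) = bg \otimes b'$, so $Q$ fixes $b\otimes b'$ if and only if $Q$ fixes $b$. Hence
\[
(\mathcal{B} \otimes \mathcal{B}')(Q) = \mathcal{B}(Q) \otimes \mathcal{B}'.
\]
Applying Corollary~\ref{cor:Brauer} to each side, I get $(U \boxtimes U')(Q) = \langle \mathcal{B}(Q) \otimes \mathcal{B}' \rangle$ on the one hand, and $U(Q) = \langle \mathcal{B}(Q) \rangle$ on the other. Since the $\mathbb{F}$-span of $\mathcal{B}(Q) \otimes \mathcal{B}'$ is canonically $\langle \mathcal{B}(Q) \rangle \otimes \langle \mathcal{B}'\rangle$, this identifies $(U \boxtimes U')(Q)$ with $U(Q) \boxtimes U'$ as vector spaces.

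Finally I would check that this identification is equivariant for $N_{G\times G'}(Q) = N_G(Q) \times G'$, which is immediate from the factored action on elementary tensors, and that the trivial action of $Q$ on the quotient is respected, so the isomorphism descends to $\mathbb{F}\bigl(N_{G\times G'}(Q)/Q\bigr)$-modules. There is no real obstacle here; the only subtlety worth being careful about is the initial choice of Sylow subgroup — one must make sure $Q \le P$ so that Corollary~\ref{cor:Brauer} applies, which is always possible by Sylow's theorem — and the observation that the $Q$-action on the second tensor factor is trivial, which lets the fixed-point construction pass cleanly through the tensor product.
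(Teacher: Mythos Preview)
Your proposal is correct and is exactly the argument the paper has in mind: the paper's proof is the single sentence ``This follows easily from Corollary~\ref{cor:Brauer} by taking $p$-permutation bases for $U$ and $U'$,'' and you have simply written out the details of that sentence.
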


\begin{proof}
This follows easily from Corollary~\ref{cor:Brauer} by taking
$p$-permutation bases for $U$ and $U'$.
\end{proof}

\begin{prop}\label{pp1}
Let $M$ be a $p$-permutation $\mathbb{F}G$-module and let $P$ be a $p$-subgroup of $G$. If $M(P)$ is an indecomposable $\mathbb{F}(N_G(P))$-module then
$M$ has a unique indecomposable summand $U$ such that $P$ is contained in a vertex of $U$.
\end{prop}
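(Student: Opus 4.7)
The plan is to invoke Krull--Schmidt and then exploit the additivity of the Brauer construction. Write $M = U_1 \oplus U_2 \oplus \cdots \oplus U_k$ as a direct sum of indecomposable $\mathbb{F}G$-modules. Each $U_i$ is a $p$-permutation module, since direct summands of $p$-permutation modules are themselves $p$-permutation (as noted in the preliminaries).

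Next I would apply Lemma~\ref{Sum} iteratively to obtain an isomorphism
$$M(P) \cong U_1(P) \oplus U_2(P) \oplus \cdots \oplus U_k(P)$$
of $\mathbb{F}N_G(P)$-modules. By hypothesis the left-hand side is indecomposable, and in particular nonzero. Therefore exactly one summand $U_i(P)$ on the right is nonzero; call this summand $U = U_{i_0}$. Existence is immediate (at least one summand must be nonzero so that $M(P) \ne 0$), and uniqueness follows because if two different $U_i(P)$ were nonzero then $M(P)$ would split as a direct sum of two proper nonzero submodules, contradicting indecomposability.

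Finally I would translate the condition $U_i(P) \neq 0$ into the vertex condition using Theorem~\ref{BT2}: for an indecomposable $p$-permutation module $U_i$ with vertex $Q_i$, one has $U_i(P) \neq 0$ if and only if $P$ is contained (up to $G$-conjugacy) in $Q_i$. Combining this with the previous step, $P$ is contained in a vertex of exactly one of the $U_i$, namely $U$, which is the claim.

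The argument is essentially a routine bookkeeping with the Brauer construction; no real obstacle arises, because all the substantive content has already been packaged into Lemma~\ref{Sum} and Theorem~\ref{BT2}. The only point that needs a sentence of care is uniqueness, which follows from the indecomposability hypothesis rather than from any finer property of the $U_i(P)$ (they need not themselves be indecomposable $\mathbb{F}N_G(P)$-modules --- what matters is just that at most one of them can be nonzero).
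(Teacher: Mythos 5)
Your proof is correct and follows essentially the same approach as the paper: both arguments hinge on the additivity of the Brauer construction (Lemma~\ref{Sum}) together with the characterization of when $U(P)\neq 0$ for an indecomposable $p$-permutation module (Theorem~\ref{BT2}). The only cosmetic difference is that the paper phrases the uniqueness step as a direct contradiction argument starting from two hypothetical summands with $P$ in a vertex, while you first decompose $M$ fully via Krull--Schmidt and then observe that exactly one Brauer quotient can survive; you also make the (trivial but unstated in the paper) existence step explicit, which is a slight improvement in completeness.
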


\begin{proof}
Suppose by contradiction that exist $V_1$ and $V_2$ indecomposable summands of $M$ with vertices $Q_1$ and $Q_2$ respectively, such that $P\leq Q_1,Q_2$. 
Then by Lemma~\ref{Sum} we have that $$V_1(P)\oplus V_2(P)\ |\ M(P).$$
This contradicts the indecomposability of $M(P)$ since by Theorem \ref{BC1} we have that $V_i(P)\neq 0$ for $i\in\{1,2\}$.
\end{proof}

An argument that we shall use several times is stated in the lemma below:

\begin{lemma}\label{Lemma:perm} If $P$ is a $p$-group and $Q$ is a subgroup of $P$ then
the permutation module $\mathbb{F}\uparrow_Q^P$ is indecomposable, with vertex $Q$. $\qed$
\end{lemma}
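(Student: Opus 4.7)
The plan is to establish indecomposability and the statement about the vertex separately, both leveraging the $p$-group hypothesis on $P$. Throughout, write $M = \mathbb{F}\uparrow_Q^P$ and regard it as a $p$-permutation $\mathbb{F}P$-module with permutation basis $\mathcal{B} = \{gQ : gQ \in P/Q\}$, on which $P$ acts transitively.

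For indecomposability, I would compute $M^P$ directly. Since $P$ acts transitively on $\mathcal{B}$, the fixed subspace $M^P$ is one-dimensional, spanned by $\sum_{gQ \in P/Q} gQ$. If we had a non-trivial decomposition $M = N_1 \oplus N_2$, then $M^P = N_1^P \oplus N_2^P$. But any non-zero $\mathbb{F}P$-module for $P$ a $p$-group in characteristic $p$ has non-zero $P$-fixed points (all composition factors are trivial, so the socle is non-zero and contained in the fixed points), forcing $\dim M^P \geq 2$, a contradiction.

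For the vertex, one containment is immediate: since $M$ is induced from the trivial $\mathbb{F}Q$-module, $M$ is relative $Q$-projective, so a vertex of $M$ is contained in a $P$-conjugate of $Q$. For the reverse containment I would apply Corollary~\ref{cor:Brauer} with $R = Q$. A basis element $gQ$ is fixed by $Q$ iff $g^{-1}Qg \subseteq Q$, and since $|g^{-1}Qg| = |Q|$, this is equivalent to $g \in N_P(Q)$; in particular $eQ \in \mathcal{B}(Q)$, so $\mathcal{B}(Q) \neq \varnothing$. Hence $M$ has an indecomposable summand with a vertex containing $Q$, and since $M$ is itself indecomposable by the previous step, its vertex contains $Q$. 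Combining the two bounds, the vertex is a subgroup of $P$ that both contains $Q$ and is conjugate to a subgroup of $Q$; comparing orders forces it to equal $Q$.

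I do not anticipate any serious obstacle. The only subtlety worth flagging is that the containment coming from induction is only up to $P$-conjugacy, so the order comparison at the end is what pins the vertex down to $Q$ itself rather than merely a conjugate of $Q$.
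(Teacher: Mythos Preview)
Your proof is correct. The paper states this lemma without proof (the $\qed$ symbol immediately follows the statement), treating it as a standard fact from local representation theory; your argument supplies precisely the expected details---indecomposability via the one-dimensionality of $M^P$ for a transitive permutation module over a $p$-group, and the vertex determination by bounding above via relative $Q$-projectivity and below via Corollary~\ref{cor:Brauer}---so there is nothing further to compare.
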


We conclude the section by recalling the definition and the basic properties of Scott modules. We refer the reader to \cite[Section 2]{Broueperm} for a more detailed account. 
Given a subgroup $H$ of $G$ there exists a unique indecomposable summand $U$ of the permutation module $\mathbb{F}\uparrow_H^G$ such that 
the trivial $\mathbb{F}G$-module is a submodule of $U$. We say that $U$ is the \textit{Scott module} of $G$ associated to $H$ and we denote it by 
$\mathrm{Sc}(G,H)$. The following theorem summarizes the main properties of Scott modules.
\begin{teo}\label{scott}
Let $G$ be a finite group, $H$ a subgroup of $G$ and $P$ a Sylow $p$-subgroup of $H$. Then the Scott module $\mathrm{Sc}(G,P)$ is isomorphic to $\mathrm{Sc}(G,H)$ and is uniquely determined 
up to isomorphism by either of the following properties:
\begin{itemize}
\item  The trivial $\mathbb{F}G$-module is isomorphic to a submodule of $\mathrm{Sc}(G,P)$.
\item  The trivial $\mathbb{F}G$-module is isomorphic to a quotient of $\mathrm{Sc}(G,P)$.
\end{itemize}
Moreover, $\mathrm{Sc}(G,P)$ has vertex $P$ and the Brou\'e correspondent  $(\mathrm{Sc}(G,P))(P)$ is isomorphic to the projective 
cover of the trivial $\mathbb{F}(N_{G}(P)/P)$-module
\end{teo}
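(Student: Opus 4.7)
The plan is to proceed in four stages, treating existence/uniqueness, the two submodule/quotient characterizations, the isomorphism $\mathrm{Sc}(G,P)\cong\mathrm{Sc}(G,H)$, and finally the vertex and Brauer correspondent.

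First I would establish the basic existence and uniqueness of the Scott module for any subgroup $K\leq G$. By Frobenius reciprocity,
\[
\dim\mathrm{Hom}_{\mathbb{F}G}(\mathbb{F},\mathbb{F}\!\uparrow_K^G)=\dim\mathrm{Hom}_{\mathbb{F}K}(\mathbb{F},\mathbb{F})=1,
\]
so the trivial module occurs exactly once as a submodule of $\mathbb{F}\!\uparrow_K^G$, and hence sits inside a unique indecomposable summand. This already gives the submodule characterization of $\mathrm{Sc}(G,K)$.

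Next I would prove $\mathrm{Sc}(G,P)\cong\mathrm{Sc}(G,H)$. Since $[H:P]$ is coprime to $p$, the averaging idempotent $\frac{1}{[H:P]}\sum_{hP}hP$ gives a decomposition $\mathbb{F}\!\uparrow_P^H\cong \mathbb{F}_H\oplus X$, and applying induction yields $\mathbb{F}\!\uparrow_P^G\cong \mathbb{F}\!\uparrow_H^G\oplus X\!\uparrow^G$. The unique summand of $\mathbb{F}\!\uparrow_H^G$ with trivial submodule is $\mathrm{Sc}(G,H)$; since the total multiplicity of the trivial module inside $\mathbb{F}\!\uparrow_P^G$ is still $1$ by Frobenius reciprocity, $\mathrm{Sc}(G,H)$ must also be the unique indecomposable summand of $\mathbb{F}\!\uparrow_P^G$ containing the trivial module, giving the isomorphism. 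For the quotient characterization, I would invoke the self-duality of the permutation module $\mathbb{F}\!\uparrow_P^G$ (the basis of cosets provides a $G$-invariant nondegenerate symmetric form): the dual of a summand with trivial submodule is a summand with trivial quotient, and uniqueness in each case forces them to coincide.

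Finally, for the vertex and Brauer correspondent I would analyse $(\mathbb{F}\!\uparrow_P^G)(P)$ using Corollary~\ref{cor:Brauer}. Taking as $p$-permutation basis $\mathcal{B}=\{gP:g\in G\}$, a coset $gP$ is $P$-fixed if and only if $g^{-1}Pg\leq P$, i.e.\ $g\in N_G(P)$. Thus
\[
(\mathbb{F}\!\uparrow_P^G)(P)=\langle gP:g\in N_G(P)\rangle
\]
is the regular $\mathbb{F}(N_G(P)/P)$-module. Since it is nonzero, Theorem~\ref{BT2} forces some summand of $\mathbb{F}\!\uparrow_P^G$ to have vertex containing $P$; but every summand of $\mathbb{F}\!\uparrow_P^G$ has vertex inside a conjugate of $P$, so some summand has vertex exactly $P$. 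By Theorem~\ref{BC1} the Brauer map is a bijection between indecomposable summands with vertex $P$ and indecomposable projective $\mathbb{F}(N_G(P)/P)$-modules (which are exactly the indecomposable summands of the regular module). To pin down $\mathrm{Sc}(G,P)$ I would track its trivial submodule through the Brauer construction: the $G$-invariant vector $\sum_{gP\in G/P}gP$ generating the trivial submodule is $P$-fixed, and every other $P$-orbit on $\mathcal{B}$ has size divisible by $p$ and hence contributes a relative trace, so modulo $\mathrm{Tr}^P$ this vector survives as $\sum_{\bar{g}\in N_G(P)/P}\bar{g}$, the generator of the trivial submodule of the regular $\mathbb{F}(N_G(P)/P)$-module. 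The Brauer correspondent of $\mathrm{Sc}(G,P)$ is therefore the unique indecomposable projective $\mathbb{F}(N_G(P)/P)$-module with a trivial submodule, namely the projective cover of the trivial module.

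The main obstacle is the last paragraph: correctly identifying which projective indecomposable the Scott module corresponds to under Brou\'e's bijection. The key is to follow the one-dimensional trivial submodule through the Brauer construction, using the precise description of $\mathcal{B}(P)$ and the fact that non-fixed $P$-orbits on the coset basis contribute only to relative traces.
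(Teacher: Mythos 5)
The paper states this theorem without proof, referring the reader to \cite{Broueperm}; your proposal supplies a genuine argument, and most of it is sound. The existence/uniqueness of the Scott module via Frobenius reciprocity, the reduction from $H$ to a Sylow $p$-subgroup $P$ using the $p'$-index splitting of $\mathbb{F}\!\uparrow_P^H$, and the vertex-and-Brauer-correspondent computation are all correct. In particular the last paragraph correctly tracks the generator $v=\sum_{gP} gP$ of the trivial submodule through the Brauer construction: every non-fixed $P$-orbit on the coset basis has size divisible by $p$, so its orbit sum is a relative trace, and the image of $v$ in $(\mathbb{F}\!\uparrow_P^G)(P)\cong \mathbb{F}[N_G(P)/P]$ is $\sum_{\bar g}\bar g$, the generator of the unique trivial submodule of the regular module; since the indecomposable projective with trivial socle is the projective cover $P_{\mathbb{F}}$ of the trivial module, $\mathrm{Sc}(G,P)(P)\cong P_{\mathbb{F}}$.

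The one genuine gap is the quotient characterization. Self-duality of $\mathbb{F}\!\uparrow_P^G$ shows that $\mathrm{Sc}(G,P)^*$ is a summand with trivial quotient, and the dual Frobenius reciprocity $\dim\mathrm{Hom}_G(\mathbb{F}\!\uparrow_P^G,\mathbb{F})=1$ gives uniqueness (up to isomorphism) of such a summand; but together these only identify the summand with trivial quotient as $\mathrm{Sc}(G,P)^*$, not as $\mathrm{Sc}(G,P)$. The phrase ``uniqueness in each case forces them to coincide'' does not close this: you still owe an argument that the Scott module is self-dual, and that does not follow from the self-duality of the ambient permutation module alone. The fix is available from your own last paragraph, but requires reordering: once you know $\mathrm{Sc}(G,P)$ has vertex $P$ and Brauer correspondent $P_{\mathbb{F}}$, observe that $\mathrm{Sc}(G,P)^*$ also has vertex $P$ and Brauer correspondent $P_{\mathbb{F}}^*\cong P_{\mathbb{F}}$ (projective covers over a group algebra are self-dual), so the injectivity of the Brauer correspondence in Theorem~\ref{BC1} forces $\mathrm{Sc}(G,P)^*\cong\mathrm{Sc}(G,P)$, and the quotient characterization follows. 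Establish the vertex and Brauer correspondent first, then deduce self-duality and the quotient property.
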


\subsection{Blocks of symmetric groups}

The blocks of symmetric groups are combinatorially described by Nakayama's Conjecture, first proved by Brauer and Robinson in \cite{BrauerNakayama} and \cite{RobinsonNakayama}. In order to state this result, we must 
recall some definitions. 

Let $\lambda$ be a partition. 
A \emph{$p$-hook} in $\lambda$ is a connected part of the 
rim of the Young diagram of $\lambda$ consisting of exactly $p$ boxes,
whose removal leaves the diagram of a partition. 
By repeatedly stripping off $p$-hooks from~$\lambda$
we obtain the \emph{$p$-core} of $\lambda$; the number
of hooks we remove is the \emph{weight} of $\lambda$.

\begin{teo}[Nakayama's Conjecture]\label{thm:Nakayama}
Let $p$ be prime.
The $p$-blocks of  $S_n$ are labelled by pairs $(\gamma, w)$, where
$\gamma$ is a $p$-core and $w \in \mathbf{N}_0$ is the associated weight,
such that $\left|\gamma\right| + wp = n$.
Thus the Specht module $S^\lambda$ lies
in the block labelled by $(\gamma, w)$ if and only if $\lambda$
has $p$-core~$\gamma$ and weight $w$.\hfill$\Box$
\end{teo}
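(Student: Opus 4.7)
The plan is to establish Nakayama's Conjecture via the theory of central characters. Two ordinary irreducible characters $\chi^\lambda$ and $\chi^\mu$ of $S_n$ lie in the same $p$-block if and only if the central character values $\omega_\lambda(C)$ and $\omega_\mu(C)$ agree modulo a prime ideal above $p$ on every conjugacy class sum $C$; by a theorem of Brauer it suffices to verify this congruence on the $p$-regular classes of $S_n$.

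First I would evaluate $\chi^\lambda$ on $p$-regular classes using the Murnaghan--Nakayama rule. If $\sigma \in S_n$ has cycle type $\pi = (\pi_1, \ldots, \pi_k)$ with each $\pi_i$ coprime to $p$, then
$$\chi^\lambda(\sigma) = \sum_{T} (-1)^{\ell(T)},$$
summed over rim-hook tableaux $T$ of shape $\lambda$ and content $\pi$. I would then translate this computation into the $p$-abacus, on which a rim-hook of size $h$ corresponds to a single bead sliding $h$ positions. Since two partitions have the same $p$-core if and only if their abacus displays carry the same numbers of beads on each runner, this translation should furnish a sign-controlled bijection between rim-hook tableaux of shape $\lambda$ and those of shape $\mu$ having the same content $\pi$ whenever $\gamma(\lambda) = \gamma(\mu)$, giving $\chi^\lambda(\sigma) = \pm \chi^\mu(\sigma)$.

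To finish the "if" direction I would invoke the hook-length formula: a standard abacus computation shows that the $p$-part of $\prod_{(i,j) \in \lambda} h(i,j)$ depends only on $\gamma(\lambda)$, so $\chi^\lambda(1)$ and $\chi^\mu(1)$ share the same $p$-adic valuation; combined with the previous step, the central characters $\omega_\lambda$ and $\omega_\mu$ then agree modulo $p$ on every $p$-regular class sum. For the "only if" direction I would exhibit a $p$-regular class whose central character separates partitions with distinct $p$-cores, preventing two such partitions from lying in the same block; concretely, one chooses a cycle type for which the Murnaghan--Nakayama computation on the abacus reads off enough bead-counting data to recover $\gamma(\lambda)$.

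The main obstacle I anticipate is the sign bookkeeping in the abacus reformulation of Murnaghan--Nakayama: a rim-hook of leg length $\ell$ contributes $(-1)^\ell$, and under the abacus translation $\ell$ equals the number of beads jumped by the sliding bead. Setting up a sign-preserving correspondence between the successive bead slides on the abaci of $\lambda$ and of $\mu$, threading through the $p$-quotient decomposition $\lambda \leftrightarrow \bigl( \gamma(\lambda); \lambda^{(0)}, \ldots, \lambda^{(p-1)} \bigr)$, is the technically delicate step. Once this is in hand, the remaining verification that the congruence $\omega_\lambda \equiv \omega_\mu \pmod{p}$ extends from the $p$-regular classes to all of them reduces to routine abacus bookkeeping together with the reduction due to Brauer.
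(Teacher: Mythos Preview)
The paper does not prove this theorem at all: it is stated as a classical result, marked with a terminal $\Box$, and attributed to Brauer and Robinson via the references \cite{BrauerNakayama} and \cite{RobinsonNakayama}. There is therefore nothing in the paper to compare your argument against; your proposal goes far beyond what the paper attempts.

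As an independent sketch, your strategy via central characters and the Murnaghan--Nakayama rule is in the spirit of several known proofs, but one step is not correct as written. You assert that when $\gamma(\lambda)=\gamma(\mu)$ there is a sign-controlled bijection giving $\chi^\lambda(\sigma)=\pm\chi^\mu(\sigma)$ for every $p$-regular $\sigma$. This is false in general: character values on $p$-regular classes need not agree up to sign for partitions sharing a $p$-core (easy small examples already show this). What one actually needs, and what the central-character criterion requires, is only the congruence
\[
\frac{|C_\sigma|\,\chi^\lambda(\sigma)}{\chi^\lambda(1)}\equiv \frac{|C_\sigma|\,\chi^\mu(\sigma)}{\chi^\mu(1)}\pmod{p},
\]
and the standard route obtains this by reducing the Murnaghan--Nakayama sum modulo $p$ through the $p$-quotient, not by exhibiting a literal bijection of rim-hook tableaux. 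Your ``only if'' direction is also only gestured at; producing a specific $p$-regular class whose central character separates distinct $p$-cores is genuinely the harder half and needs a concrete construction rather than the promise that ``enough bead-counting data'' can be read off. If you want a self-contained account to model your write-up on, the treatment in James--Kerber \cite[Chapter~6]{JK} is the closest to the abacus language you are using.
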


Often it is best to consider partitions using James' \textit{abacus}: for a detailed account of this tool see \cite[pages 76-78]{JK}.  
For example given a partition $\lambda$, it is very easy to understand its $p$-core $\gamma(\lambda)$ by using the abacus.

The following result on the defect group of Blocks of the symmetric group (see \cite[Theorem 6.2.45]{JK}) will be important in the proof of our main Theorem \ref{TTT1}.

\begin{teo}\label{Thm:SymDefectGroups}
The defect group of a symmetric group block of weight $w$ is conjugate to a Sylow
$p$-subgroup of $S_{wp}$.
\end{teo}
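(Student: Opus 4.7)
The plan is to compute the $p$-defect $d(B)$ of the block $B=(\gamma,w)$ from the character degrees in $B$, so as to pin down $|D|$, and then to upgrade this to conjugacy information on $D$ itself. Since $p^{d(B)}=|S_n|_p\bigm/\min_{\chi^\lambda\in B}\chi^\lambda(1)_p$ and $|D|=p^{d(B)}$, the first task is a combinatorial minimisation of $v_p(\chi^\lambda(1))$ over partitions $\lambda\vdash n$ with $p$-core $\gamma$ and weight $w$.

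The key input is the abacus and the $p$-core/$p$-quotient correspondence. Writing the $p$-quotient of $\lambda$ as $(\lambda^{(0)},\ldots,\lambda^{(p-1)})$ with $\sum_k|\lambda^{(k)}|=w$, the hooks of $\lambda$ of length divisible by $p$ are in bijection with all hooks of the $\lambda^{(k)}$, a hook of length $ap$ in $\lambda$ matching one of length $a$ in some $\lambda^{(k)}$; the remaining hooks of $\lambda$ have lengths coprime to $p$. Combined with the hook length formula applied to $\lambda$ and to each $\lambda^{(k)}$, this yields
\[
v_p(\chi^\lambda(1)) = v_p(n!) - w - \sum_k v_p(|\lambda^{(k)}|!) + \sum_k v_p(\chi^{\lambda^{(k)}}(1)).
\]
To minimise, note that $\sum_k v_p(\chi^{\lambda^{(k)}}(1))\ge 0$, and by Kummer's theorem $\sum_k v_p(|\lambda^{(k)}|!)\le v_p(w!)$; both bounds are realised simultaneously by $\lambda^{(0)}=(w)$, $\lambda^{(k)}=\varnothing$ for $k\ne 0$ (for which $\chi^{(w)}(1)=1$). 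Hence $d(B) = w + v_p(w!) = v_p((wp)!)$ by Legendre's formula, so $|D|=|S_{wp}|_p$.

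The main obstacle, in my view, is upgrading this order equality to $S_n$-conjugacy of $D$ with a Sylow $p$-subgroup of the standardly embedded $S_{wp}\le S_n$. My plan is to apply Brauer's second main theorem to the $p$-element $\sigma\in S_n$ equal to a product of $w$ disjoint $p$-cycles fixing the remaining $|\gamma|$ points: a Murnaghan--Nakayama computation (take $\lambda = \gamma\cup(p^w)$, for which the hook removal is unique) shows $\chi^\lambda(\sigma)\ne 0$ for some $\chi^\lambda\in B$, so $D$ contains an $S_n$-conjugate of $\sigma$. The centraliser $C_{S_n}(\sigma) = (C_p\wr S_w)\times S_{|\gamma|}$ has a Sylow $p$-subgroup whose projection to the first factor is, by a direct order count, a Sylow $p$-subgroup of $S_{wp}$. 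The delicate step is then to produce a block $b$ of $C_{S_n}(\sigma)$ with $b^{S_n}=B$ whose defect group is supported entirely in the $(C_p\wr S_w)$ factor --- equivalently, of defect zero in the $S_{|\gamma|}$ factor --- which, via Nakayama applied to the wreath product, amounts to using that $\gamma$ is itself a $p$-core (so it labels a defect-zero block of $S_{|\gamma|}$). This compatibility of $p$-cores under Brauer induction is where the essential combinatorial content lies.
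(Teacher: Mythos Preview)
The paper does not prove this theorem: it is quoted as a standard result with a reference to \cite[Theorem 6.2.45]{JK}, and no argument is given. So there is no ``paper's own proof'' to compare against; your attempt is a standalone sketch of a classical result.

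On the substance: your defect computation is correct. The $p$-quotient identity for $v_p(\chi^\lambda(1))$ is exactly the right tool, and the minimisation argument (concentrate the weight on a single runner, take a one-row quotient) cleanly yields $d(B)=w+v_p(w!)=v_p((wp)!)$, hence $|D|=|S_{wp}|_p$.

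The conjugacy step is where your write-up is thinnest, and you flag this yourself. Two remarks. First, the notation $\lambda=\gamma\cup(p^w)$ is ambiguous and may not even define a partition with $p$-core $\gamma$ (a $p$-core can have parts $\geq p$). What you actually want is the partition whose abacus display is obtained from that of $\gamma$ by sliding a single bead $w$ places up its runner; then the $p$-quotient is $((w),\varnothing,\ldots,\varnothing)$, the Murnaghan--Nakayama removal sequence for $\sigma$ is genuinely unique, and $\chi^\lambda(\sigma)=\pm\chi^\gamma(1)\ne 0$. Second, having $\chi^\lambda(\sigma)\ne 0$ alone does not immediately place $\sigma$ inside $D$; what you need is Brauer's characterisation of defect groups via block induction: exhibit a block $b$ of $C_{S_n}(\sigma)\cong (C_p\wr S_w)\times S_{|\gamma|}$ with $b^{S_n}=B$ whose defect group is a full Sylow $p$-subgroup of $C_p\wr S_w$ (use that $\gamma$, being a $p$-core, labels a block of defect zero of $S_{|\gamma|}$), and then invoke that defect groups of $b$ are $S_n$-conjugate into $D$. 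Combined with $|D|=|S_{wp}|_p$ and the fact that a Sylow $p$-subgroup of $C_p\wr S_w$ is a Sylow $p$-subgroup of $S_{wp}$, this closes the argument. Your outline is the right one; it just needs these clarifications to be a proof rather than a plan.
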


\subsection{The Foulkes module}

Let $\mathbb{K}$ be a field and $a$, $n$ two non-zero natural numbers. Let $\Omega^{(a^n)}$ be the collection of all set partitions of $\{1,2,\ldots,an\}$ into $n$ sets of size $a$. We will denote an arbitrary element $\omega\in \Omega^{(a^n)}$ by $$\omega=\{\omega_1,\omega_2,\cdots,\omega_n\},$$ 
where $\omega_j\subseteq\{1,2,\cdots,an\}$, $|\omega_j|=a$ and $\omega_i\cap \omega_j=\emptyset$ for all $1\leq i<j\leq n$. We will call
$\omega_j$ a \emph{set of $\omega$}.
The symmetric group $S_{an}$ acts transitively in a natural way on $\Omega^{(a^n)}$ by permuting the numbers in each set of every set partition.
Let $H^{(a^n)}$ be the $\mathbb{K}S_{an}$-permutation module generated as a $\mathbb{K}$-vector space by the elements of $\Omega^{(a^n)}$, with the action of $S_{an}$ defined as the natural linear extension of the action on $\Omega^{(a^n)}$. The module $H^{(a^n)}$ is called a \textit{Foulkes module}. 
Since the action of $S_{an}$ is transitive on $\Omega^{(a^n)}$ and the stabilizer of any set partition $\omega\in\Omega^{(a^n)}$ is isomorphic to 
$S_a\wr S_n$ it is finally easy to deduce that $$H^{(a^n)}\cong \mathbb{K}\big\uparrow_{S_a\wr S_n}^{S_{an}}.$$

When the field $\mathbb{K}$ is the field of complex numbers $\mathbb{C}$, the study of the decomposition into irreducible 
direct summands of the Foulkes module is closely related to the problem known as Foulkes' Conjecture as stated firstly in \cite{Foulkes} by H.O. Foulkes in $1950$.

\begin{conjecture}
Let $\mathbb{K}=\mathbb{C}$ and let $a$ and $n$ be natural numbers such that $a<n$. Then $H^{(n^a)}$ is a direct summand of $H^{(a^n)}$.
\end{conjecture}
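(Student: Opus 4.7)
The plan is to rephrase the conjecture in the language of symmetric functions via the Frobenius characteristic map. One checks that $\phi^{(a^n)}$ and $\phi^{(n^a)}$ correspond respectively to the plethysms $h_n \circ h_a$ and $h_a \circ h_n$, so Foulkes' Conjecture becomes the assertion that $h_n \circ h_a - h_a \circ h_n$ has non-negative coefficients when expanded in the Schur basis. Equivalently, for every partition $\mu$ of $an$ one needs
\[
\left\langle \phi^{(n^a)}, \chi^\mu \right\rangle \leq \left\langle \phi^{(a^n)}, \chi^\mu \right\rangle.
\]
By Frobenius reciprocity, these two multiplicities are the dimensions of the spaces of $S_n \wr S_a$-invariants and $S_a \wr S_n$-invariants in the Specht module $S^\mu$, respectively, so it would suffice to produce, functorially in $\mu$, an injection $(S^\mu)^{S_n \wr S_a} \hookrightarrow (S^\mu)^{S_a \wr S_n}$.

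The cleanest route to such injections is to construct a single explicit $\mathbb{C}S_{an}$-module homomorphism $\Phi : H^{(n^a)} \to H^{(a^n)}$. A natural candidate sends a set partition $\omega = \{\omega_1,\ldots,\omega_a\}$ of $\{1,\ldots,an\}$ into $a$ blocks of size $n$ to a suitably weighted sum, over all ways to refine each block $\omega_i$ into $n/a$ subsets of size $a$, of the corresponding set partitions in $\Omega^{(a^n)}$; the weights are chosen so that $\Phi$ is $S_{an}$-equivariant. Since $\mathbb{C}S_{an}$ is semisimple, the image of $\Phi$ automatically splits off as a direct summand, so the conjecture collapses to checking that $\Phi$ is injective. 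A parallel attack would work directly at the level of $\mu$, expressing both sides via the Littlewood--Richardson rule as counts of certain tableaux and then exhibiting, for each $\mu$, an explicit injection between the two tableau sets.

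The main obstacle is that neither construction is known to work in general, and indeed no proof of Foulkes' Conjecture exists: this is precisely why the statement has been open since 1950. Both sides are controlled by concrete combinatorial formulas, but no canonical injection of underlying objects has ever been produced, nor any plethystic positivity identity strong enough to witness the difference. The conjecture has been verified for small values of $a$ and $n$ (notably for $a \leq 5$) by a combination of combinatorial and computational methods, but each extension has required a genuinely new idea. A serious attack would seem to require either a new plethystic positivity result or a deeper structural comparison between the two wreath products $S_a \wr S_n$ and $S_n \wr S_a$ inside $S_{an}$; neither is suggested by the modular machinery assembled in the present paper, whose aim is in fact to disprove a modular analogue rather than to settle the original conjecture.
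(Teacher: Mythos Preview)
The statement you were asked to address is Foulkes' Conjecture itself, which the paper records as an open conjecture and does not attempt to prove; the paper's aim is rather to study (and ultimately disprove) a modular analogue. There is therefore no proof in the paper to compare against, and your proposal correctly recognises this: you sketch the standard reformulations and then honestly conclude that no proof is known.

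One small inaccuracy in your sketch is worth flagging. Your candidate map $\Phi$ is described as sending a set partition into $a$ blocks of size $n$ to a weighted sum over refinements of each block $\omega_i$ into ``$n/a$ subsets of size $a$''. This only makes sense when $a \mid n$; for general $a<n$ there is no such refinement, so the map as you describe it is not defined. The equivariant maps that have actually been studied in connection with Foulkes' Conjecture (for instance those of Dent--Siemons and of Brion) are constructed differently, and proving their injectivity is exactly where the difficulty lies. This does not affect your overall conclusion, which matches the paper's: the conjecture is open, and nothing in the present paper purports to settle it.
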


If we replace $\mathbb{C}$ with $\mathbb{F}$ (a field of prime characteristic $p$) in the statement above we obtain a modular version of Foulkes' Conjecture. 
This version is known to be false but we are not aware of any explicit reference in the literature. We will give a short proof of this fact as a corollary of Theorem \ref{T1}.

\section{On the indecomposable summands of $H^{(a^n)}$}

This section is devoted to the proof of Theorem \ref{T1}. As explained in the introduction, the proof is split into a series of lemmas and propositions.
The structure of the section is similar to Section 4 of \cite{GW} but different ideas and further \textit{ad hoc} arguments are needed here.
We start by fixing some notation. Let $\mathbb{F}$ be a field of odd prime characteristic $p$ and let $a$ and $n$ be natural numbers such that $a<p\leq n$. Let $S_a\wr S_n$ be the subgroup of $S_{an}$ acting transitively and imprimitively on $\{1,2,\ldots,an\}$ and having as blocks of imprimitivity the sets 
$$T_j=\{j,n+j,2n+j,\ldots,(a-1)n+j\}$$ for $j\in\{1,\ldots,n\}$. 
In this setting we have that for any Sylow $p$-subgroup $P$ of $S_a\wr S_n$ there exists a Sylow $p$-subgroup $Q$ of $S_{\{1,\ldots,n\}}$, such that $P$ is conjugate to $$\overline{Q}=\{\overline{x}\ |\ x\in Q\},$$
where $(j+kn)\overline{x}=(j)x+kn$ for all $j\in\{1,\ldots,n\}$ and all $k\in\{0,1,\ldots,a-1\}$.
Let $\rho$ be an element of order $p$ in $S_a\wr S_n$. By the above remarks there exists $s\in\{1,2,\ldots,\lfloor\frac{n}{p}\rfloor\}\cap\mathbb{N}$ such that $\rho$ has $sa$ orbits of order $p$ and $a(n-sp)$ fixed points in its natural action on $\{1,2,\ldots,an\}$. 

For all $j\in\mathbb{N}$ such that $pj\leq an$ let $z_j$ be the $p$-cycle of $S_{an}$ defined by $$z_j=(p(j-1)+1,p(j-1)+2,\ldots,pj).$$ Denote by $R_\ell$ the cyclic subgroup of $S_{an}$ of order $p$ generated by $z_1z_2\cdots z_\ell$. 
We will call $\mathcal{O}_1,\ldots,\mathcal{O}_\ell$ the \textit{$p$-orbits} of $R_\ell$. Note that $\mathcal{O}_j=\mathrm{supp}(z_j)$ for all $j\in\{1,2,\ldots,\ell\}$. 

In the following lemma we will study the Brou\'e correspondence for $H^{(a^n)}$ with respect to $R_\ell$.

\begin{lemma}\label{L1}
Let $a$ and $n$ be natural numbers and $p$ an odd prime such that $a<p\leq n$. Let $\ell$ be a natural number such that $\ell p\leq an$. If $\ell=as$ for some natural number $s$, then $$H^{(a^n)}(R_{as})\cong H^{(a^{sp})}(R_{as})\boxtimes H^{(a^{n-sp})}$$ 
as $\mathbb{F}N_{S_{an}}(R_{as})$-modules. If $\ell$ is not an integer multiple of $a$ then $H^{(a^n)}(R_\ell)=0$.
\end{lemma}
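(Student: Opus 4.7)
The plan is to use Corollary~\ref{cor:Brauer} to identify $H^{(a^n)}(R_\ell)$ with the $\mathbb{F}$-span of $\Omega^{(a^n)}(R_\ell)$, the set of $R_\ell$-fixed set partitions, and then analyze these combinatorially. Write $g = z_1 z_2 \cdots z_\ell$ for the chosen generator, so $\mathrm{supp}(g) = \mathcal{O}_1 \cup \cdots \cup \mathcal{O}_\ell = \{1, \ldots, \ell p\}$ and $\mathrm{Fix}(g) = \{\ell p + 1, \ldots, an\}$. A set partition $\omega$ is $R_\ell$-fixed precisely when $g$ permutes the sets $\omega_1, \ldots, \omega_n$.

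Given such an $\omega$, each $\omega_j$ lies in a $\langle g \rangle$-orbit of length $1$ or $p$. In the length-$1$ case $g$ induces a permutation of the $a$ elements of $\omega_j$ of order dividing both $p$ and $a!$; since $a<p$ this order is $1$, so $\omega_j \subseteq \mathrm{Fix}(g)$, and I call such $\omega_j$ \emph{tame}. In the length-$p$ case, the orbit $\{\omega_{j_1}, \ldots, \omega_{j_p}\}$ consists of pairwise disjoint sets, so any $g$-fixed point $x$ in $\omega_{j_1}$ would satisfy $x = xg \in \omega_{j_1}\cdot g = \omega_{j_2}$, a contradiction. Hence the union $U = \omega_{j_1} \cup \cdots \cup \omega_{j_p}$ avoids $\mathrm{Fix}(g)$ and is therefore a union of exactly $|U|/p = a$ of the orbits $\mathcal{O}_i$; I call these sets \emph{wild}. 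Since every point of $\bigcup_i \mathcal{O}_i$ must lie in some wild set, the $\ell$ orbits $\mathcal{O}_i$ are partitioned into blocks of size $a$, one block per $\langle g\rangle$-orbit of wild sets. This forces $a \mid \ell$; when that fails $\Omega^{(a^n)}(R_\ell) = \varnothing$, giving $H^{(a^n)}(R_\ell) = 0$ as required.

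Now assume $\ell = as$ and put $\Omega_1 = \{1, \ldots, asp\}$ and $\Omega_2 = \{asp+1, \ldots, an\}$. The analysis above shows that in any $\omega \in \Omega^{(a^n)}(R_{as})$ the wild sets together partition $\Omega_1$ and the tame sets together partition $\Omega_2$, so separating these two families yields a bijection
\[\Omega^{(a^n)}(R_{as}) \;\longleftrightarrow\; \Omega^{(a^{sp})}(R_{as}) \times \Omega^{(a^{n-sp})},\]
after relabelling $\Omega_1 \cong \{1,\ldots,asp\}$ and $\Omega_2 \cong \{1,\ldots,a(n-sp)\}$. (Every set partition of $\Omega_2$ is automatically $R_{as}$-fixed since $R_{as}$ acts trivially on $\Omega_2$.) To promote this to an $\mathbb{F}N_{S_{an}}(R_{as})$-module isomorphism, observe that any element of $N_{S_{an}}(R_{as})$ must preserve $\mathrm{supp}(R_{as}) = \Omega_1$, and hence also $\Omega_2$; this yields
\[N_{S_{an}}(R_{as}) = N_{S_{asp}}(R_{as}) \times S_{a(n-sp)},\]
with the two factors acting on $\Omega_1$ and $\Omega_2$ respectively. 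Since the first factor moves only the wild component of $\omega$ and the second only the tame component, the bijection is equivariant, and extending $\mathbb{F}$-linearly produces the desired isomorphism.

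The main technical point is the orbit analysis in the second paragraph; this is where the hypothesis $a<p$ is used twice, first to force tame sets to be pointwise fixed, and then (via the exclusion of fixed points from wild orbits) to make the count $|U|=ap$ yield exactly $a$ of the $\mathcal{O}_i$ per wild orbit. Everything else is bookkeeping.
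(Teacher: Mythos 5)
Your proof is correct and follows essentially the same approach as the paper: identify $H^{(a^n)}(R_\ell)$ with the span of the $R_\ell$-fixed set partitions, show each fixed $\omega$ splits into a part supported on $\mathrm{supp}(R_{as})$ and a part on the fixed points, and check equivariance of the resulting bijection. The one small divergence is in the case $a \nmid \ell$: the paper argues that $R_\ell$ is then not conjugate to any subgroup of $S_a\wr S_n$ and invokes Theorem~\ref{BT2}, while you derive $\Omega^{(a^n)}(R_\ell)=\varnothing$ directly from the tame/wild orbit count and invoke Corollary~\ref{cor:Brauer}; these are two phrasings of the same underlying combinatorial fact, and your explicit analysis (tame sets are pointwise fixed because $a<p$; wild orbits cover exactly $a$ of the $\mathcal{O}_i$ each) also supplies the justification the paper leaves implicit for its claim that no fixed set partition mixes support and fixed points.
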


\begin{proof}
We already noticed that the number of $p$-orbits of an element of order $p$ in $S_a\wr S_n$ must be a multiple of $a$. Therefore if $\ell$ is not an integer multiple of $a$ then $R_\ell$ is not conjugate to any subgroup of $S_a\wr S_n$. This implies that $H^{(a^n)}(R_\ell)=0$ by Theorem \ref{BT2}.

Suppose now that $\ell=as$ for some $s\in\mathbb{N}$. Let $\omega=\{\omega_1,\omega_2,\ldots,\omega_n\}\in\Omega^{(a^n)}$ be fixed by $R_{as}$. Then there exist $\omega_{j_1},\ldots,\omega_{j_{sp}}$ sets of $\omega$ such that $$\bigcup_{i=1}^{sp}\omega_{j_i}=\mathrm{supp}(R_{as})$$
since no set of a fixed set partition can contain two numbers $x$ and $y$ such that $x\in \mathrm{supp}(R_{as})$ and $y\notin \mathrm{supp}(R_{as})$. 
So we can write each $\omega\in\Omega^{(a^n)}(R_{as})$ as $\omega=u_\omega\cup v_\omega$ where $$u_\omega=\big\{\omega_{j_1},\ldots,\omega_{j_{sp}}\big\}\in\Omega^{(a^{sp})}(R_{as}),$$ and $v_\omega$ is a set partition in $\Omega^{(a^{n-sp})}_+$, that is the collection of all the set partitions of $\{asp+1,\ldots,an\}$ into $n-sp$ sets of size $a$. We will also denote by $H^{(a^{n-sp})}_+$ the $\mathbb{F}S_{\{asp+1,\ldots,an\}}$-permutation module generated by $\Omega^{(a^{n-sp})}_+$ as a vector space. 
The map $$\psi:\Omega^{(a^n)}(R_{as})\longrightarrow \Omega^{(a^{sp})}(R_{as})\times \Omega^{(a^{n-sp})}_+$$
that associates to each $\omega\in \Omega^{(a^n)}(R_{as})$ the element $u_\omega\times v_\omega\in \Omega^{(a^{sp})}(R_{as})\times \Omega^{(a^{n-sp})}_+$ is a well defined bijection. 
This factorization of the linear basis of $H^{(a^n)}(R_{as})$ induces an isomorphism of vector spaces between $H^{(a^n)}(R_{as})$ and $H^{(a^{sp})}(R_{as})\boxtimes H^{(a^{n-sp})}_+$ that is compatible with the action of $N_{S_{an}}(R_{as})\cong N_{S_{asp}}(R_{as})\times S_{a(n-sp)}$. Therefore we have that 
$$H^{(a^n)}(R_{as})\cong H^{(a^{sp})}(R_{as})\boxtimes H^{(a^{n-sp})}_+$$ as $\mathbb{F}(N_{S_{an}}(R_{as}))$-modules. The proposition now follows after identifying 
$S_{\{asp+1,\ldots,an\}}$ with $S_{a(n-sp)}$ and $H^{(a^{n-sp})}_+$ with $H^{(a^{n-sp})}$.
\end{proof}

Lemma \ref{L1} allows us to restrict for the moment our attention to the study of the Brou\'e correspondent $H^{(a^{sp})}(R_{as})$ of $H^{(a^{sp})}$. In particular we will now give a precise description of its canonical basis $\Omega^{(a^{sp})}(R_{as})$ constitued by the set partitions fixed under the action of $R_{as}$. 
In order to do this we need to introduce a new important concept. 

Let $\delta=\{\delta_1,\delta_2,\ldots,\delta_s\}$ be a set partition of $\{1,2,\ldots,as\}$ into $s$ sets of size $a$ (namely $\delta\in\Omega^{(a^s)}$).
Let $A_1,A_2,\ldots,A_s$ be subsets of $\{1,2,\ldots,asp\}$ of size $a$ such that for each $i\in\{1,\ldots,s\}$ and $j\in\{1,\ldots,as\}$ we have 
\[
|A_i\cap \mathcal{O}_j|=
\begin{cases}
1\ \ \text{if}\ \ j\in \delta_i \\
0\ \ \text{if}\ \  j\notin \delta_i. 
\end{cases}
\]
In particular each set $A_i$ contains at most one element of a given orbit of $R_{as}$.
Consider now $\omega$ to be the element of $\Omega^{(a^{sp})}(R_{as})$ of the form
$$\omega=\{A_1,A_1\sigma,A_1\sigma^2,\ldots,A_1\sigma^{p-1},A_2,\ldots A_2\sigma^{p-1},\ldots\ldots,A_s,\ldots,A_s\sigma^{p-1}\},$$
where $\sigma=z_1z_2\cdots z_{as}$.
We will say that the set partition $\omega$ has \emph{type} $\delta$. Notice that from the type we can read how the orbits of $R_{as}$ are relatively distributed in the sets of the set partition $\omega$. 

In the following lemma we will show that the set partitions of $\Omega^{(a^{sp})}$ that are fixed by the action of $R_{as}$ are precisely the ones 
of the form described above.

\begin{lemma}\label{L2}
Let the set partition $\omega=\{\omega_1,\ldots,\omega_{sp}\}$ be an element of $\Omega^{(a^{sp})}$. Then $\omega$ is fixed by $R_{as}$ if and only if 
there exists a corresponding set partition $\delta=\{\delta_1,\ldots,\delta_s\}\in\Omega^{(a^s)}$ and $s$ sets $A_1,\ldots, A_s$ of $\omega$ such that 
\[
|A_i\cap \mathcal{O}_j|=
\begin{cases}
1\ \ \text{if}\ \ j\in \delta_i \\
0\ \ \text{if}\ \  j\notin \delta_i 
\end{cases}
\]
and $$\omega=\{A_1,A_1\sigma,A_1\sigma^2,\ldots,A_1\sigma^{p-1},A_2,\ldots A_2\sigma^{p-1},\ldots\ldots,A_s,\ldots,A_s\sigma^{p-1}\},$$
where $\sigma=z_1z_2\cdots z_{as}$.
\end{lemma}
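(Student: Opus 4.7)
The forward direction is immediate: if $\omega$ has the stated form, then $\sigma$ cyclically permutes the $p$ sets $A_i, A_i\sigma, \ldots, A_i\sigma^{p-1}$ for each $i$, so $\omega\sigma=\omega$ and hence $\omega$ is fixed by $R_{as}$. The real content is the converse.

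So assume $\omega=\{\omega_1,\ldots,\omega_{sp}\}$ is fixed by $R_{as}$. Since $\sigma$ has order $p$, it permutes the $sp$ sets of $\omega$ in orbits of length $1$ or $p$. I would first rule out the length-$1$ orbits: if some $\omega_i$ satisfies $\omega_i\sigma=\omega_i$ as a subset of $\{1,\ldots,asp\}$, then $\omega_i$ is a union of $\sigma$-orbits on $\{1,\ldots,asp\}$. But $\sigma=z_1\cdots z_{as}$ has no fixed points on $\{1,\ldots,asp\}$ and every orbit has size exactly $p$, while $|\omega_i|=a<p$. This is a contradiction. The same argument works with $\sigma$ replaced by any nontrivial power $\sigma^k$, since $\sigma^k$ generates the same group $R_{as}$ and its orbits on $\{1,\ldots,asp\}$ are the $\mathcal{O}_j$. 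I will use this strengthened fact below.

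Thus all $\sigma$-orbits on the sets of $\omega$ have length $p$, and there are exactly $s$ of them. Choose representatives $A_1,\ldots,A_s$, so that
$$\omega=\{A_1,A_1\sigma,\ldots,A_1\sigma^{p-1},\ldots,A_s,A_s\sigma,\ldots,A_s\sigma^{p-1}\}.$$
Define $\delta_i=\{j\in\{1,\ldots,as\}:A_i\cap \mathcal{O}_j\neq\varnothing\}$. The plan is to verify that $|A_i\cap\mathcal{O}_j|\leq 1$, that $|\delta_i|=a$, and that the $\delta_i$ are pairwise disjoint; together these imply $\delta:=\{\delta_1,\ldots,\delta_s\}\in\Omega^{(a^s)}$ with the required intersection property.

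For the bound $|A_i\cap \mathcal{O}_j|\leq 1$: if $x,y\in A_i\cap\mathcal{O}_j$ with $x\neq y$, then since $\sigma$ acts on $\mathcal{O}_j$ as the $p$-cycle $z_j$, we have $y=x\sigma^k$ for some $k\in\{1,\ldots,p-1\}$. Hence $y\in A_i\cap A_i\sigma^k$; but $A_i$ and $A_i\sigma^k$ are both sets of the partition $\omega$, so they are either equal or disjoint, forcing $A_i=A_i\sigma^k$. This contradicts the strengthened fact from the previous paragraph. Since $|A_i|=a$ and $A_i$ meets each orbit in $0$ or $1$ element, we get $|\delta_i|=a$ automatically. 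For disjointness of the $\delta_i$: if $j\in\delta_i\cap\delta_{i'}$ with $i\neq i'$, pick $x\in A_i\cap\mathcal{O}_j$ and $y\in A_{i'}\cap\mathcal{O}_j$ and write $y=x\sigma^k$. Then $x\in A_i\cap A_{i'}\sigma^{-k}$, which by the set-partition dichotomy forces $A_i=A_{i'}\sigma^{-k}$, contradicting the fact that $A_i$ and $A_{i'}$ were chosen as representatives of distinct $\sigma$-orbits on the sets of $\omega$. Since $\sum_i|\delta_i|=as$, the $\delta_i$ then form a set partition of $\{1,\ldots,as\}$ into $s$ sets of size $a$, which is exactly the desired conclusion.

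The only delicate point is keeping the two levels straight — the action of $\sigma$ on elements of $\{1,\ldots,asp\}$ versus its action on the blocks $\omega_i$ of the partition — and using the set-partition dichotomy to convert every coincidence of elements into an equality of blocks. Once this bookkeeping is in place, the argument is essentially a counting check driven by the inequality $a<p$.
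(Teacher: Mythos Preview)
Your proof is correct and follows essentially the same strategy as the paper: use $a<p$ to show that no block of $\omega$ is fixed by a nontrivial power of $\sigma$, deduce that $\sigma$ acts on the blocks in $s$ orbits of length $p$, and then check that each block meets each $\mathcal{O}_j$ in at most one point. The paper organizes the argument orbit-by-orbit (fix $\mathcal{O}_j$, count the blocks meeting it, force that number to be $p$ and hence each intersection to have size $1$), whereas you argue block-by-block using the ``equal or disjoint'' dichotomy for partition blocks to rule out $|A_i\cap\mathcal{O}_j|\ge 2$ and to force the $\delta_i$ to be disjoint; but these are two packagings of the same idea and neither introduces a genuinely new ingredient.
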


\begin{proof}
Suppose that $\omega$ is fixed by $R_{as}=\left\langle\sigma\right\rangle$. Let $\mathcal{O}_j$ be an orbit of $R_{as}$ for some $j\in\{1,2,\ldots,as\}$ and let 
$\omega_{j_1},\omega_{j_2},\ldots,\omega_{j_l}$ be the sets of $\omega$ such that $\omega_{j_i}\cap \mathcal{O}_j\not=\emptyset$. Clearly $l\leq p$ because $|\mathcal{O}_j|=p$. Since $\omega\sigma=\omega$ we have that for all $x\in\{j_1,\ldots,j_l\}$ there exists $y\in\{j_1,\ldots,j_l\}$ such that $x\neq y$ and
$\omega_{j_x}\sigma=\omega_{j_y}$ and no $\omega_{j_i}$ is fixed by $\sigma$ because $a<p$. In particular we have that $R_{as}$ acts without fixed points on the set $\{\omega_{j_1},\omega_{j_2},\ldots,\omega_{j_l}\}$. Therefore there exists a number $k\geq 1$ such that $$p^k=|\{\omega_{j_1},\omega_{j_2},\ldots,\omega_{j_l}\}|=l\leq p.$$
This immediately implies that $l=p$ and therefore that $|\omega_{j_i}\cap \mathcal{O}_j|=1$ for all $i\in\{1,2,\ldots,l\}$. This argument holds for all the 
$R_{as}$ orbits $\mathcal{O}_1,\mathcal{O}_2,\ldots,\mathcal{O}_{as}$. Hence for all $x\in\{1,2,\ldots,sp\}$ the set $\omega_x$ of $\omega$ contains $a$ numbers no two of which are in the same $R_{as}$-orbit. 
Consider one of those sets, say $A_1$, of $\omega$. Define the correspondent set $\delta_1$ of size $a$ as follows: 
for all $i\in\{1,2,\ldots,as\}$, let $i\in\delta_1$ if and only if $|A_1\cap\mathcal{O}_i|=1$. Observe that since $\omega\sigma=\omega$ we have that 
$A_1,A_1\sigma,\ldots,A_1\sigma^{p-1}$ are $p$ distinct sets of $\omega$ such that for all $k\in\{0,1,\ldots,p-1\}$ we have that  
\[
|A_1\sigma^k\cap \mathcal{O}_j|=
\begin{cases}
1\ \ \text{if}\ \ j\in \delta_1 \\
0\ \ \text{if}\ \  j\notin \delta_1. 
\end{cases}
\]
We now repeat the above construction by considering a set $A_2$ of $\omega$ such that $A_2\not=A_1\sigma^k$ for any $k\in\{0,1,\ldots,p-1\}$ and defining the corresponding set $\delta_2$ exactly as above. After $s$ iterations of the process we obtain the claim, where the set partition $\delta\in \Omega^{(a^s)}$ corresponding to $\omega$ is $\delta=\{\delta_1,\delta_2,\ldots,\delta_s\}$.  

The converse is trivial since a set partition $\omega$ of the form described in the hypothesis is clearly fixed by the action of $\sigma$.
\end{proof}

From Lemma \ref{L2} we obtain that every $\omega\in\Omega^{(a^{sp})}(R_{as})$ is of a well defined type $\delta\in\Omega^{(a^s)}$. In the next lemma we will fix a $\delta\in\Omega^{(a^s)}$ and we will
calculate explicitly the number of set partitions of type $\delta$ in $\Omega^{(a^{sp})}(R_{as})$.

\begin{lemma}\label{numb}
For every given $\delta\in\Omega^{(a^{s})}$ there are $p^{(a-1)s}$ distinct set partitions in $\Omega^{(a^{sp})}(R_{as})$ of type $\delta$.
\end{lemma}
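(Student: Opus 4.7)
The plan is to count directly, using the parametrisation of type-$\delta$ set partitions given by Lemma \ref{L2}. Fix $\delta=\{\delta_1,\ldots,\delta_s\}\in\Omega^{(a^s)}$. By Lemma \ref{L2}, a set partition $\omega\in\Omega^{(a^{sp})}(R_{as})$ of type $\delta$ is completely determined by a choice of $s$ sets $A_1,\ldots,A_s$ of size $a$, where for each $i\in\{1,\ldots,s\}$ the set $A_i$ contains exactly one element from each of the $a$ orbits $\mathcal{O}_j$ with $j\in\delta_i$ (and no element from any orbit $\mathcal{O}_j$ with $j\notin\delta_i$), via
$$\omega=\bigcup_{i=1}^s \{A_i,A_i\sigma,\ldots,A_i\sigma^{p-1}\},$$
where $\sigma=z_1z_2\cdots z_{as}$.

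The main step is to count, for each fixed $i$, the number of distinct orbits $\{A_i\sigma^k:0\le k\le p-1\}$ of candidate sets $A_i$. Since each $A_i$ is obtained by picking one element from each of the $a$ orbits of $R_{as}$ indexed by $\delta_i$, and each such orbit has size $p$, there are exactly $p^a$ choices for the set $A_i$ itself. These $p^a$ sets are permuted by the cyclic group $\langle\sigma\rangle$; I would next verify that this action is free. If $A_i\sigma^k=A_i$ for some $0<k<p$, then $\sigma^k$ fixes every element of $A_i$, but $\sigma$ acts as a $p$-cycle on each orbit $\mathcal{O}_j$, so $\sigma^k$ fixes a point of $\mathcal{O}_j$ only when $p\mid k$, contradicting $0<k<p$. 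Hence the $\langle\sigma\rangle$-orbit of $A_i$ has size exactly $p$, giving $p^a/p=p^{a-1}$ distinct choices of orbit.

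Finally, since the choices for different indices $i$ are independent (the $\delta_i$'s are pairwise disjoint, so they involve disjoint orbits of $R_{as}$, and hence disjoint elements of $\{1,\ldots,asp\}$), the total number of set partitions of type $\delta$ is the product
$$\prod_{i=1}^s p^{a-1}=p^{(a-1)s},$$
as required. I do not expect any serious obstacle: the only point that needs a line of justification is the freeness of the $\langle\sigma\rangle$-action on the candidate $A_i$'s, which follows at once from $\sigma$ being a product of disjoint $p$-cycles.
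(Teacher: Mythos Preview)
Your argument is correct and is essentially the same direct count as the paper's: both use Lemma~\ref{L2} to parametrise the type-$\delta$ elements and then obtain $p^{a-1}$ choices per index $i\in\{1,\ldots,s\}$. The only cosmetic difference is bookkeeping---the paper fixes a canonical representative in each $\langle\sigma\rangle$-orbit (the set containing a given multiple of $p$) and counts the remaining $a-1$ coordinates, whereas you count all $p^a$ candidate sets $A_i$ and divide by the free $\langle\sigma\rangle$-action.
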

\begin{proof}
Define $\delta^\star\in\Omega^{(a^s)}$ by $$\delta^\star=\big\{\{1,1+s,\ldots,1+(a-1)s\},\{2,2+s,\ldots,2+(a-1)s\},\cdots,\{s,2s,\ldots,as\}\big\}.$$
By Lemma \ref{L2} we have that given any set partition $\omega=\{\omega_1,\ldots,\omega_{sp}\}\in \Omega^{(a^{sp})}(R_s)$ of type $\delta^\star$,
then each set $\omega_j$ contains exactly one element lying in $\{1,2,\ldots,sp\}$, the union of the first $s$ orbits $\mathcal{O}_1,\mathcal{O}_2,\ldots,\mathcal{O}_s$ of $R_{as}$.
Therefore, without loss of generality, we can relabel the indices of the sets of $\omega$ in order to have for all $j\in\{1,2,\ldots,sp\}$ $$\omega_j=\{j,x_j^1,x_j^2,\ldots,x_j^{a-1}\},$$
where $x_j^i$ is a number lying in the $R_{as}$-orbit of $j+isp$ for each $i\in\{1,2,\ldots,a-1\}$.
Notice that this implies that there are $p$ possible different choices for each $x_j^i$.
If we fix $j\in\{1,2,\ldots,sp\}$ such that $j$ is not divisible by $p$ then there exist unique natural numbers $t$ and $k$ in $\{0,1,\ldots,s-1\}$ and $\{1,2,\ldots,p-1\}$ respectively, such that $j=tp+k$.
Moreover, by definition of $\sigma$ it follows that $((t+1)p)\sigma^k=j$. Since $\omega\sigma^k=\omega$, we must have $\omega_{(t+1)p}\sigma^k=\omega_j$. Therefore for all $i\in\{1,2,\ldots,a-1\}$ we have that 
$$x_j^i=x_{(t+1)p}^i\sigma^{k}$$
Hence the set partition $\omega$ is uniquely determined by its sets $\omega_p,\omega_{2p},\ldots,\omega_{sp}$. This implies that there are exactly 
$p^{(a-1)s}$ different set partitions of type $\delta^\star$ in $\Omega^{(a^{sp})}(R_{as})$. It is an easy exercise to verify that, changing the labels, the argument above works for any other type $\delta$ in $\Omega^{(a^{s})}$. 
\end{proof}

Consider now the subgroup of $N_{S_{asp}}(R_{as})$ defined by $$C:=\left\langle z_1\right\rangle\times\left\langle z_2\right\rangle\times\ldots\times\left\langle z_{as}\right\rangle.$$
Notice that $C$ preserves the type of set partitions in its action on $\Omega^{(a^{sp})}(R_{as})$. Therefore we have that the sub-vectorspace $K_\delta$ of $H^{(a^{sp})}(R_{as})$ generated by all the fixed set partitions of type $\delta$ is an $\mathbb{F}C$-submodule of $H^{(a^{sp})}(R_{as})$ for any given $\delta$. Moreover, we deduce the following result: 

\begin{prop}\label{ZZZZ}
The following isomorphism of $\mathbb{F}C$-modules holds: 
$$H^{(a^{sp})}(R_{as})\big\downarrow_C\cong \bigoplus_{\delta\in \Omega^{(a^s)}}K_\delta.$$ 
\end{prop}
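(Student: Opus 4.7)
The plan is to deduce the statement almost directly from the combinatorial analysis already carried out in Lemma \ref{L2}, upgrading a transparent vector space decomposition to a decomposition of $\mathbb{F}C$-modules by checking that $C$ preserves the type of a fixed set partition.

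First, by Corollary \ref{cor:Brauer} (applied with a Sylow $p$-subgroup of $N_{S_{asp}}(R_{as})$ containing $R_{as}$), the Brauer quotient $H^{(a^{sp})}(R_{as})$ has the $R_{as}$-fixed set partitions $\Omega^{(a^{sp})}(R_{as})$ as an $\mathbb{F}$-basis. By Lemma \ref{L2}, every element of $\Omega^{(a^{sp})}(R_{as})$ has a well-defined type $\delta\in\Omega^{(a^s)}$. Thus we obtain a partition of the basis
\[
\Omega^{(a^{sp})}(R_{as})=\bigsqcup_{\delta\in\Omega^{(a^s)}}\Omega^{(a^{sp})}(R_{as})_\delta,
\]
and setting $K_\delta=\langle\Omega^{(a^{sp})}(R_{as})_\delta\rangle$ gives the required decomposition as $\mathbb{F}$-vector spaces for free.

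The one thing that needs verification is that each $K_\delta$ is stable under the action of $C=\langle z_1\rangle\times\cdots\times\langle z_{as}\rangle$, and I would prove this by a direct computation. The generator $z_j$ acts as a $p$-cycle on $\mathcal{O}_j=\mathrm{supp}(z_j)$ and fixes every element outside $\mathcal{O}_j$; in particular, for any $a$-subset $A$ of $\{1,\ldots,asp\}$ and any orbit $\mathcal{O}_l$ one has $|Az_j\cap\mathcal{O}_l|=|A\cap\mathcal{O}_l|$. Moreover, the $z_k$'s act on pairwise disjoint sets, so they commute; in particular each $z_j$ commutes with $\sigma=z_1z_2\cdots z_{as}$. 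Consequently, if
\[
\omega=\{A_1,A_1\sigma,\ldots,A_1\sigma^{p-1},\ldots,A_s,\ldots,A_s\sigma^{p-1}\}
\]
has type $\delta$ in the sense of Lemma \ref{L2}, then for every $z_j$ the set partition $\omega z_j$ equals
\[
\{A_1',A_1'\sigma,\ldots,A_1'\sigma^{p-1},\ldots,A_s',\ldots,A_s'\sigma^{p-1}\}
\]
with $A_i'=A_iz_j$, and each $A_i'$ still satisfies $|A_i'\cap\mathcal{O}_l|=|A_i\cap\mathcal{O}_l|$. Hence $\omega z_j$ is again of the form of Lemma \ref{L2} with the same associated type $\delta$, proving that $K_\delta\cdot z_j\subseteq K_\delta$.

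Combining these two points, $K_\delta$ is an $\mathbb{F}C$-submodule for every $\delta\in\Omega^{(a^s)}$, and the basis-level disjoint union upgrades to the desired isomorphism of $\mathbb{F}C$-modules. The only substantive point is the invariance of type under $C$; once the commutation of the $z_j$'s with $\sigma$ is noted, nothing else is required, so I do not anticipate a real obstacle here.
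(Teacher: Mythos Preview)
Your proof is correct and follows essentially the same approach as the paper: both arguments reduce to the vector space decomposition by type coming from Lemma~\ref{L2}, and then check that $C$ preserves the type of a fixed set partition. The paper phrases the key invariance step more tersely---noting simply that $C$ and $R_{as}$ have the same orbits on $\{1,\ldots,asp\}$---while you spell this out via $|Az_j\cap\mathcal{O}_l|=|A\cap\mathcal{O}_l|$ and the commutation of $z_j$ with $\sigma$, but the substance is identical.
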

\begin{proof}
For any given $\delta\in\Omega^{(a^s)}$ denote by $B_\delta$ the subset of $\Omega^{(a^{sp})}(R_{as})$ containing all the set partitions of type $\delta$.
Clearly $H^{(a^{sp})}(R_{as})$ decomposes as a vector space into the direct sum of all the $K_\delta$ for $\delta\in\Omega^{(a^s)}$. Moreover we observe that 
the orbits of $C$ on $\{1,2,\ldots,asp\}$ are exactly the same as the orbits of $R_{as}$ and therefore if $\omega\in B_\delta$ then $\omega c\in B_\delta$ for any $c\in C$. 
This implies that $$H^{(a^{sp})}(R_{as})\downarrow_C \cong \bigoplus_{\delta\in \Omega^{(a^s)}}K_\delta$$ as $\mathbb{F}C$-modules, as desired.
\end{proof}

We will now define three $p$-subgroups of $S_{asp}$ that will play a central role in the next part of the section. 
For all $j\in\{1,2,\ldots,s\}$ denote by $\pi_{j}$ the $p$-element of $C$ given by $$\pi_j=\overline{z_j}=z_jz_{j+s}z_{j+2s}\cdots z_{j+(a-1)s}.$$
Let $E_s$ be the elementary abelian subgroup of $C$ of order $p^s$ defined by
 
$$E_s = \langle \pi_1 \rangle \times \cdots
\times \langle \pi_s \rangle.$$ 

Let $P_s$ be a Sylow $p$-subgroup
of $S_{\{1,\ldots, sp\}}$ with base group $\langle z_1, \ldots, z_s \rangle$, chosen 
so that $z_1z_2\cdots z_s$ is in its centre. (The existence of such Sylow $p$-subgroups follows from
the construction of Sylow $p$-subgroups of symmetric groups
as iterated wreath products in \cite[4.1.19 and 4.1.20]{JK}).

Let $Q_s$ be the group consisting of all permutations $\overline{g}$ 
where $g$ lies in $P_s$. For the reader's convenience we recall that for all $k\in\{0,1,\ldots,a-1\}$ and all $j\in\{1,2,\ldots,sp\}$, we have that 
$$(j+ksp)\overline{g}=(j)g+ksp.$$
In particular we observe that this implies that $\overline{g}=g_0g_1\cdots g_{a-1}$, where for all $k\in\{0,1,\ldots a-1\}$, $g_k$ is the element of $S_{asp}$ that fixes all the numbers outside $\{ksp+1,ksp+2,\ldots,(k+1)sp\}$ and such that $(j+ksp)g_k=(j)g+ksp$ for all $j\in\{1,2,\ldots,sp\}$. 
Notice that $Q_s$ has $E_s$ as normal base group by construction and clearly $R_{as}\triangleleft E_s\triangleleft C$ and $R_{as}\triangleleft Q_s$. 

We are now very close to deducing the indecomposability of $H^{(a^{sp})}(R_{as})$ as an $\mathbb{F}N_{S_{asp}}(R_{as})$-module. In order to prove this we need to observe an important structural property of the $\mathbb{F}C$-modules $K_\delta$ for all $\delta\in\Omega^{(a^s)}$.

\begin{prop}\label{PC}
For any $\delta\in\Omega^{(a^s)}$ there exists $g\in N_{S_{asp}}(R_{as})$ such that $$K_\delta\cong \mathbb{F}\uparrow_{E_s^g}^C$$
\end{prop}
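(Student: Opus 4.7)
The plan is to treat the canonical type first and then transport the result via a lift of $S_{as}$ to $N_{S_{asp}}(R_{as})$.

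First, I take $\delta = \delta^\star$ and consider the ``diagonal'' fixed set partition
$$\omega^\star = \bigl\{\{j, j+sp, j+2sp, \ldots, j+(a-1)sp\} : j = 1, 2, \ldots, sp\bigr\}.$$
This lies in $B_{\delta^\star}$ because the set indexed by $j$ picks up exactly one element from each of the orbits $\mathcal{O}_r, \mathcal{O}_{r+s}, \ldots, \mathcal{O}_{r+(a-1)s}$, where $r$ is determined by $j \in \mathcal{O}_r$. I would then compute directly the $C$-stabilizer of $\omega^\star$: an element $c = z_1^{a_1} \cdots z_{as}^{a_{as}} \in C$ preserves each orbit $\mathcal{O}_i$ setwise, so a set $\{j, j+sp, \ldots, j+(a-1)sp\}$ of $\omega^\star$ is sent to $\{j \cdot z_r^{a_r}, (j+sp) \cdot z_{r+s}^{a_{r+s}}, \ldots, (j+(a-1)sp) \cdot z_{r+(a-1)s}^{a_{r+(a-1)s}}\}$, and this must again be of the form $\{j', j'+sp, \ldots, j'+(a-1)sp\}$. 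Matching offsets within orbits forces $a_r \equiv a_{r+s} \equiv \cdots \equiv a_{r+(a-1)s} \pmod{p}$ for every $r \in \{1, \ldots, s\}$, which is exactly the condition defining $E_s = \langle \pi_1 \rangle \times \cdots \times \langle \pi_s \rangle$.

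Next, since $|C|/|E_s| = p^{(a-1)s}$, which by Lemma \ref{numb} equals $|B_{\delta^\star}|$, the $C$-orbit of $\omega^\star$ fills out all of $B_{\delta^\star}$. Hence the natural basis of $K_{\delta^\star}$ is a single transitive $C$-set with point stabilizer $E_s$, so by Lemma \ref{Lemma:perm} one obtains $K_{\delta^\star} \cong \mathbb{F}\uparrow_{E_s}^C$.

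For a general $\delta \in \Omega^{(a^s)}$, I would choose $\tau \in S_{as}$ with $\delta^\star \cdot \tau = \delta$ (possible because $S_{as}$ acts transitively on $\Omega^{(a^s)}$) and lift it to $g \in S_{asp}$ by $(p(i-1)+k) g = p(\tau(i)-1)+k$ for $i \in \{1,\ldots,as\}$ and $k \in \{1, \ldots, p\}$. A direct check shows that $g$ commutes with $\sigma = z_1 z_2 \cdots z_{as}$, so $g \in C_{S_{asp}}(R_{as}) \subseteq N_{S_{asp}}(R_{as})$, and that $g z_i g^{-1} = z_{\tau(i)}$, so $g$ normalizes $C$ and carries $E_s$ to the subgroup $E_s^g \subseteq C$. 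The set partition $\omega^\star \cdot g$ then lies in $B_\delta$, its $C$-stabilizer is $E_s^g$, and the same size-counting argument as before yields $K_\delta \cong \mathbb{F}\uparrow_{E_s^g}^C$.

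The main obstacle is the explicit stabilizer computation for $\omega^\star$: one has to track carefully that the requirement for $c \in C$ to permute the sets of $\omega^\star$ (and not merely permute their elements) forces the offset tuples $(a_r, a_{r+s}, \ldots, a_{r+(a-1)s})$ to be constant. The hypothesis $a < p$ is implicitly used via Lemma \ref{L2} to guarantee the rigid form of the fixed set partitions on which the counting relies.
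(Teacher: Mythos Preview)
Your argument is correct and follows essentially the same route as the paper: identify $E_s$ as the $C$-stabiliser of a canonical element of $B_{\delta^\star}$, use the count from Lemma~\ref{numb} to match orbit size with index, and then conjugate by an element of $N_{S_{asp}}(R_{as})$ permuting the orbits $\mathcal{O}_i$ to pass to an arbitrary $\delta$. Two cosmetic points: the isomorphism $K_{\delta^\star}\cong\mathbb{F}\!\uparrow_{E_s}^C$ is just the definition of a transitive permutation module and does not need Lemma~\ref{Lemma:perm}, and with the paper's right-action conventions your conjugation formula should read $g^{-1}z_i g=z_{\tau(i)}$ rather than $gz_ig^{-1}=z_{\tau(i)}$, though of course either way $g$ normalises $C$.
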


\begin{proof}
As usual, define $\delta^\star\in\Omega^{(a^s)}$ by $$\delta^\star=\{\delta_1,\delta_2,\ldots,\delta_s\},$$ where $\delta_i=\{i,i+s,i+2s,\ldots,i+(a-1)s\}$ and let 
$\omega^\star$ be any fixed element of $B_{\delta^\star}$. Then by Lemma \ref{L2} we have that $$\omega^\star=\{A_1,A_1\sigma,\ldots,A_1\sigma^{p-1},A_2,\ldots A_2\sigma^{p-1},\ldots\ldots,A_s,\ldots,A_s\sigma^{p-1}\},$$
for some sets $A_1,A_2,\ldots,A_s$ such that 
\[
|A_i\cap \mathcal{O}_j|=
\begin{cases}
1\ \ \text{if}\ \ j\in \delta^\star_i\\
0\ \ \text{if}\ \  j\notin \delta^\star_i. 
\end{cases}
\]
This implies that we can equivalently rewrite $\omega^\star$ as $$\omega^\star=\{A_1,A_1\pi_1,\ldots,A_1\pi_1^{p-1},A_2,A_2\pi_2\ldots A_2\pi_2^{p-1},\ldots\ldots,A_s,\ldots,A_s\pi_s^{p-1}\}.$$
Therefore $\omega^\star$ is fixed by the action of $E_s$.
Moreover if we denote by $L$ the stabilizer in $C$ of $\omega^\star$ we have that as $\mathbb{F}C$-modules $$K_{\delta^\star}\cong \mathbb{F}\uparrow_L^C$$ since $C$ acts transitively on the elements of $B_{\delta^\star}$. 

Lemma \ref{numb} implies that $\dim_{\mathbb{F}}(K_{\delta^\star})=p^{s(a-1)}$, therefore by \cite[page 56]{Alperin} we have that 
$$p^{s(a-1)}=|C:L|\leq |C:E_s|=p^{s(a-1)}.$$
Hence $E_s=L$ and $K_{\delta^\star}\cong \mathbb{F}\uparrow_{E_s}^C$ as $\mathbb{F}C$-modules. Since $N_{S_{asp}}(R_{as})$ acts as the full symmetric group on the set $\{\mathcal{O}_1,\mathcal{O}_2,\ldots,\mathcal{O}_{as}\}$, we obtain that for any $\delta\in\Omega^{(a^s)}$ 
there exists $g\in N_{S_{asp}}(R_{as})$ such that any set partition of type $\delta$ in $\Omega^{(a^{sp})}(R_{as})$ is fixed by $E_s^g$. With an argument completely similar to the one used above we deduce that $$K_{\delta}\cong \mathbb{F}\uparrow_{E_s^g}^C.$$
\end{proof}

The following corollary of Proposition \ref{PC} will be extremely useful in the last part of the section.

\begin{cor}\label{C1}
Every indecomposable summand of $H^{(a^{sp})}(R_{as})$ has vertex containing $E_s$.
\end{cor}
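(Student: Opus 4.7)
My plan is to restrict $H^{(a^{sp})}(R_{as})$ to the elementary abelian $p$-subgroup $C \leq N_{S_{asp}}(R_{as})$ (note that $R_{as}$ lies in the centre of the abelian group $C$, so the inclusion $C \leq N_{S_{asp}}(R_{as})$ is legitimate) and then apply the first part of Corollary \ref{BC3} together with the structural information already established in Propositions \ref{ZZZZ} and \ref{PC}.

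The preparatory step is to combine these two propositions to obtain
$$H^{(a^{sp})}(R_{as})\big\downarrow_C \;\cong\; \bigoplus_{\delta\in \Omega^{(a^s)}} K_\delta,$$
where each $K_\delta \cong \mathbb{F}\uparrow_{E_s^{g_\delta}}^C$ for some $g_\delta \in N_{S_{asp}}(R_{as})$. Since $C$ is a $p$-group, Lemma \ref{Lemma:perm} guarantees that each $K_\delta$ is indecomposable as an $\mathbb{F}C$-module with vertex $E_s^{g_\delta}$. In particular, this is the Krull--Schmidt decomposition of $H^{(a^{sp})}(R_{as})\downarrow_C$ into indecomposables.

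Next, let $U$ be an indecomposable summand of $H^{(a^{sp})}(R_{as})$ as an $\mathbb{F}N_{S_{asp}}(R_{as})$-module. Then $U\downarrow_C$ is a non-zero summand of the above direct sum, so by Krull--Schmidt it contains at least one $K_\delta$ as a direct summand. Applying the first part of Corollary \ref{BC3} to the restriction from $N_{S_{asp}}(R_{as})$ to $C$ yields a vertex $R$ of $U$ which contains a vertex of this $K_\delta$, namely $E_s^{g_\delta}$. Since vertices are only determined up to conjugacy inside $N_{S_{asp}}(R_{as})$, I may replace $R$ by $R^{g_\delta^{-1}}$ to obtain a vertex of $U$ that contains $E_s$, as claimed.

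I do not foresee a substantial obstacle: the argument is essentially a direct application of Corollary \ref{BC3} once Propositions \ref{ZZZZ} and \ref{PC} have reduced the problem to tracking vertices of permutation modules for the abelian $p$-group $C$. The only delicate point is verifying that the conjugating elements $g_\delta$ produced by Proposition \ref{PC} really lie in $N_{S_{asp}}(R_{as})$, which is exactly what makes the final conjugation step stay inside the ambient group in which vertices of $U$ are defined.
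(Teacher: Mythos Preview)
Your proof is correct and follows essentially the same route as the paper: restrict to $C$, use Propositions \ref{ZZZZ} and \ref{PC} together with Lemma \ref{Lemma:perm} to identify the indecomposable $\mathbb{F}C$-summands and their vertices as $N_{S_{asp}}(R_{as})$-conjugates of $E_s$, and then apply the first part of Corollary \ref{BC3}. Your write-up is in fact slightly more explicit than the paper's about the final conjugation step (noting that $g_\delta \in N_{S_{asp}}(R_{as})$ so that $R^{g_\delta^{-1}}$ is still a vertex of $U$), which is a welcome clarification.
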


\begin{proof}
Let $U$ be an indecomposable summand of $H^{(a^{sp})}(R_{as})$. By Lemma \ref{Lemma:perm} and Proposition \ref{PC} we observe that the restriction of $U$ to $C$ is isomorphic to 
a direct sum of indecomposable $p$-permutation $\mathbb{F}C$-modules with vertices conjugate in $N_{S_{asp}}(R_{as})$ to $E_s$. Therefore by the first part of Corollary \ref{BC3}
we obtain that $E_s$ is contained in a vertex of $U$.
\end{proof}
It is now possible to determine a vertex of $H^{(a^{sp})}(R_{as})$ as an $\mathbb{F}N_{S_{asp}}(R_{as})$-module.
\begin{prop}\label{P4}
The $\mathbb{F}N_{S_{asp}}(R_{as})$-module $H^{(a^{sp})}(R_{as})$ is indecomposable and has vertex $Q_s\in \mathrm{Syl}_p(S_a\wr S_{sp})$
\end{prop}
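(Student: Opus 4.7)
The plan is to prove both assertions by combining a transitivity argument identifying $H^{(a^{sp})}(R_{as})$ as a suitable induced module with an application of Proposition~\ref{pp1} to the Brauer quotient at $E_s$. First, let $T_j = \{j,\,sp+j,\,2sp+j,\ldots,(a-1)sp+j\}$ for $j \in \{1,\ldots,sp\}$ and set $\omega_0 = \{T_1,\ldots,T_{sp}\}$; these are the imprimitivity blocks of $S_a \wr S_{sp}$ on $\{1,\ldots,asp\}$, so the stabilizer of $\omega_0$ in $S_{asp}$ is $S_a \wr S_{sp}$ and in particular $\omega_0$ is fixed by $Q_s$. Next I would show that $G := N_{S_{asp}}(R_{as})$ acts transitively on $\Omega^{(a^{sp})}(R_{as})$: the normalizer contains elements realizing the full symmetric group $S_{as}$ on the orbits $\{\mathcal{O}_1,\ldots,\mathcal{O}_{as}\}$, so all types in $\Omega^{(a^s)}$ are interchanged, while the abelian subgroup $C$ acts transitively on the patterns within a fixed type (using Lemma~\ref{L2} and Lemma~\ref{numb}). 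Consequently $H^{(a^{sp})}(R_{as}) \cong \mathbb{F}\uparrow_{N_{S_a\wr S_{sp}}(R_{as})}^{G}$. Because $R_{as} \trianglelefteq Q_s$ and $Q_s \in \mathrm{Syl}_p(S_a \wr S_{sp})$, $Q_s$ is also a Sylow $p$-subgroup of $N_{S_a\wr S_{sp}}(R_{as})$, so every indecomposable summand of $H^{(a^{sp})}(R_{as})$ has vertex contained in a conjugate of $Q_s$; combined with Corollary~\ref{cor:Brauer} applied to the $Q_s$-fixed partition $\omega_0$, this produces an indecomposable summand whose vertex equals $Q_s$.

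To prove indecomposability I would apply Proposition~\ref{pp1} with $P = E_s$: since every summand of $H^{(a^{sp})}(R_{as})$ has vertex containing $E_s$ by Corollary~\ref{C1}, it suffices to show that the Brauer quotient $H^{(a^{sp})}(R_{as})(E_s)$ is indecomposable as a module for $N_G(E_s)/E_s$. The action of $N_{S_{asp}}(E_s)$ on $E_s$ factors through automorphisms which permute the generators $\pi_1,\ldots,\pi_s$ (up to scalar multiplication) and in particular stabilize the diagonal subgroup $R_{as}$, so $N_G(E_s) = N_{S_{asp}}(E_s)$, and Lemma~\ref{LDB} gives $H^{(a^{sp})}(R_{as})(E_s) \cong H^{(a^{sp})}(E_s)$. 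Its canonical basis consists of the $p^{(a-1)s}$ set partitions of type $\delta^\star$ (the only type contributing $E_s$-fixed points: by Proposition~\ref{PC}, the $E_s$-fixed part of $K_\delta \cong \mathbb{F}\uparrow_{E_s^{g_\delta}}^{C}$ is non-zero only when $E_s = E_s^{g_\delta}$, i.e., when $\delta = \delta^\star$), and a direct stabilizer computation shows that $C/E_s$ acts simply transitively on them. Hence $H^{(a^{sp})}(E_s)\downarrow_{C/E_s}$ is the regular representation of the $p$-group $C/E_s$, which is projective and indecomposable. Combining this with Frobenius reciprocity --- which implies that the trivial module appears as a quotient of $H^{(a^{sp})}(E_s)$ with multiplicity one, via the sum-of-basis map --- will show that $H^{(a^{sp})}(E_s)$ has simple head and is itself indecomposable.

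Once the Brauer quotient is indecomposable, Proposition~\ref{pp1} yields that $H^{(a^{sp})}(R_{as})$ has a unique indecomposable summand whose vertex contains $E_s$, and Corollary~\ref{C1} forces this summand to be the entire module; its vertex then equals $Q_s$ by the first step. The main obstacle will be upgrading the projective-indecomposable regular representation on $C/E_s$ to a projective indecomposable module over the larger group algebra $\mathbb{F}(N_{S_{asp}}(E_s)/E_s)$: when $s < p$, $C/E_s$ is already a Sylow $p$-subgroup of $N_{S_{asp}}(E_s)/E_s$ and Higman's criterion yields projectivity directly, but for $s \geq p$ one must carefully analyze the residual action of the Sylow $p$-subgroup of $S_s$ permuting the generators $\pi_1,\ldots,\pi_s$ and verify it does not introduce a non-trivial idempotent in $\mathrm{End}(H^{(a^{sp})}(E_s))$.
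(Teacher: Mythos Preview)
Your overall strategy matches the paper's: use Corollary~\ref{C1} together with Proposition~\ref{pp1} at the subgroup $E_s$, reducing the question to the indecomposability of $(H^{(a^{sp})}(R_{as}))(E_s)$, and settle the vertex by exhibiting the $Q_s$-fixed set partition $\omega_0$ together with the upper bound coming from the stabiliser. Your transitivity description of $H^{(a^{sp})}(R_{as})$ as $\mathbb{F}\big\uparrow_{N_{S_a\wr S_{sp}}(R_{as})}^{G}$ is a pleasant variant of the paper's size argument for the vertex and is correct.

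There is, however, one wrong claim and one unnecessary detour that creates your ``obstacle''. The assertion $N_G(E_s)=N_{S_{asp}}(E_s)$ is false for $s\ge 2$: take $g$ supported on $X_1$ acting on each $\mathcal{O}_{ls+1}$ by the automorphism sending $z_{ls+1}$ to $z_{ls+1}^2$; then $\pi_1^g=\pi_1^2$ while $\pi_j^g=\pi_j$ for $j\ge 2$, so $g$ normalises $E_s$ but sends $\sigma=\pi_1\cdots\pi_s$ to $\pi_1^2\pi_2\cdots\pi_s\notin R_{as}$. Fortunately this equality is never needed: Lemma~\ref{LDB} already gives the isomorphism $(H^{(a^{sp})}(R_{as}))(E_s)\cong H^{(a^{sp})}(E_s)$ as $\mathbb{F}N_G(E_s)$-modules, which is all Proposition~\ref{pp1} requires.

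More importantly, once you have shown that the restriction of $(H^{(a^{sp})}(R_{as}))(E_s)$ to $C$ is the regular module of the $p$-group $C/E_s$ (equivalently $\mathbb{F}\big\uparrow_{E_s}^C$, by Lemma~\ref{Lemma:perm}), you are finished: indecomposability upon restriction to the subgroup $C\le N_G(E_s)$ forces indecomposability as an $\mathbb{F}N_G(E_s)$-module. This is exactly how the paper argues. Your Frobenius-reciprocity step and the entire ``main obstacle'' paragraph are therefore superfluous; there is nothing further to check for $s\ge p$, and no need to promote projectivity to the larger group $N_{S_{asp}}(E_s)/E_s$.
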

\begin{proof}
Let $\delta^\star$ be the set partition of $\Omega^{(a^s)}$ defined at the beginning of the proof of Proposition \ref{PC}.
Since $\omega\in\Omega^{(a^{sp})}(R_{as})$ is fixed by $E_s$ if and only if $\omega\in B_{\delta^\star}$ and since $E_s\triangleleft C$, we have that $$(H^{(a^{sp})}(R_{as}))(E_s)\downarrow_C = H^{(a^{sp})}(R_{as})\downarrow_C(E_s)=K_{\delta^\star}(E_s)\cong \mathbb{F}\uparrow_{E_s}^C,$$
as $\mathbb{F}C$-modules.
By Lemma \ref{Lemma:perm} we have that $(H^{(a^{sp})}(R_{as}))(E_s)\downarrow_C$ is indecomposable, hence also $(H^{(a^{sp})}(R_{as}))(E_s)$ is indecomposable. Therefore by Proposition \ref{pp1} there exists a unique summand of $H^{(a^{sp})}(R_{as})$ with vertex containing 
$E_s$, but this implies that $H^{(a^{sp})}(R_{as})$ is indecomposable by Corollary \ref{C1}.

Let $Q\leq_p N_{S_{asp}}(R_{as})$ be a vertex of $H^{(a^{sp})}(R_{as})$.
Consider $\omega^\star$ to be the set partition in $\Omega^{(a^{sp})}(R_{as})$ defined by $$\omega^\star=\{\omega_1,\omega_2,\ldots,\omega_{sp}\},$$
where $\omega_j=\{j,j+sp,j+2sp,\ldots,j+(a-1)sp\}$, for all $j\in\{1,2,\ldots,sp\}$. By construction we have that $Q_s$ fixes $\omega^\star$. Therefore a conjugate of $Q_s$ is a subgroup of $Q$. On the other hand by Corollary \ref{cor:Brauer} there exists $\omega\in\Omega^{(a^{sp})}(R_{as})$ such that $Q$ fixes $\omega$. Since the stabilizer of $\omega$ in $S_{asp}$ is isomorphic to $S_a\wr S_{sp}$, we deduce that $Q$ is isomorphic to a subgroup of a Sylow $p$-subgroup of $S_a\wr S_{sp}$. In particular this implies that $|Q|\leq |Q_s|$ and therefore we obtain that $Q_s$ is a vertex of $H^{(a^{sp})}(R_{as})$.
\end{proof}

\begin{cor}\label{CS2}
The Foulkes module $H^{(a^{sp})}$ has a unique indecomposable summand $U$ with vertex $Q_s\in \mathrm{Syl}_p(S_a\wr S_{sp})$. 
In particular $U$ is the Scott module $\mathrm{Sc}(S_{asp},S_a\wr S_{sp})$ and we have that $$H^{(a^{sp})}(Q_s)=U(Q_s)\cong P_{\mathbb{F}}$$ as $\mathbb{F}(N_{S_{asp}}(Q_s)/Q_s)$-modules. Moreover, any other indecomposable summand of $H^{(a^{sp})}$ has vertex conjugate to a subgroup of $Q_s$. 
\end{cor}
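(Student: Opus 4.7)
My plan is to transfer the indecomposability established in Proposition \ref{P4} for $H^{(a^{sp})}(R_{as})$ across to $H^{(a^{sp})}(Q_s)$ via Lemma \ref{LDB}, and then read off all the claims using Brou\'e's correspondence and the characterisation of Scott modules.

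The first step is to verify that $R_{as} \trianglelefteq Q_s$. The generator $z_1 z_2 \cdots z_{as}$ of $R_{as}$ equals $\overline{z_1 z_2 \cdots z_s}$, which lies in $Q_s$; and since $z_1 z_2 \cdots z_s$ was chosen to be central in $P_s$, this element is central in $Q_s$, so $R_{as}\leq Z(Q_s)$. Lemma \ref{LDB}, applied with $R = R_{as}$ and $Q = Q_s$ inside $K = N_{S_{asp}}(R_{as})$, therefore yields
$$ H^{(a^{sp})}(Q_s) \;\cong\; \bigl( H^{(a^{sp})}(R_{as}) \bigr)(Q_s) $$
as $\mathbb{F}N_K(Q_s)$-modules. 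By Proposition \ref{P4}, $H^{(a^{sp})}(R_{as})$ is an indecomposable $p$-permutation $\mathbb{F}K$-module with vertex $Q_s$, so Theorem \ref{BC1} identifies the right-hand side as an indecomposable projective $\mathbb{F}(N_K(Q_s)/Q_s)$-module. Since a module whose restriction is indecomposable is itself indecomposable, $H^{(a^{sp})}(Q_s)$ is indecomposable as an $\mathbb{F}N_{S_{asp}}(Q_s)$-module.

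Next I would invoke Proposition \ref{pp1} with $P = Q_s$ to conclude that $H^{(a^{sp})}$ has a unique indecomposable summand $U$ whose vertex contains (a conjugate of) $Q_s$. Because $H^{(a^{sp})} = \mathbb{F}\uparrow_{S_a \wr S_{sp}}^{S_{asp}}$, every indecomposable summand has vertex conjugate to a subgroup of a Sylow $p$-subgroup of $S_a \wr S_{sp}$, hence to a subgroup of $Q_s$. Thus the vertex of $U$ is exactly (conjugate to) $Q_s$, while every other summand has vertex conjugate to a proper subgroup of $Q_s$, which yields the final sentence of the corollary. Combining Lemma \ref{Sum} with Theorem \ref{BT2}, the Brauer quotient at $Q_s$ of any other summand vanishes, so $H^{(a^{sp})}(Q_s) = U(Q_s)$.

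Finally, the Scott module $\mathrm{Sc}(S_{asp}, S_a \wr S_{sp}) \cong \mathrm{Sc}(S_{asp}, Q_s)$ is, by definition, an indecomposable summand of $H^{(a^{sp})}$ with vertex $Q_s$ (Theorem \ref{scott}). By the uniqueness just established, $U$ must coincide with this Scott module, and the same theorem identifies $U(Q_s) \cong P_{\mathbb{F}}$, the projective cover of the trivial $\mathbb{F}(N_{S_{asp}}(Q_s)/Q_s)$-module. The one subtle point in the plan is that Lemma \ref{LDB} only gives an isomorphism of $\mathbb{F}N_K(Q_s)$-modules rather than $\mathbb{F}N_{S_{asp}}(Q_s)$-modules; but since indecomposability lifts from the restriction to the whole module, no further work is needed to conclude that $H^{(a^{sp})}(Q_s)$ is indecomposable over the larger normaliser, and the remaining identifications are then formal.
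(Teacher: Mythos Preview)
Your proof is correct, and the overall strategy---Proposition~\ref{P4} plus Proposition~\ref{pp1} plus the Scott module characterisation---matches the paper's. The one difference is where you apply Proposition~\ref{pp1}: you first push the indecomposability from $R_{as}$ up to $Q_s$ via Lemma~\ref{LDB}, and then invoke Proposition~\ref{pp1} at $Q_s$; the paper instead applies Proposition~\ref{pp1} directly at $R_{as}$ (using Proposition~\ref{P4} as stated), obtains a unique summand $U$ whose vertex contains $R_{as}$, and then observes that the Scott module, having vertex $Q_s\supseteq R_{as}$, must be this $U$. The paper's route is shorter and sidesteps the ``subtle point'' you flag about $N_K(Q_s)$ versus $N_{S_{asp}}(Q_s)$, since Lemma~\ref{LDB} is never needed; your route, on the other hand, yields the indecomposability of $H^{(a^{sp})}(Q_s)$ as an intermediate step rather than deducing it a posteriori from $U(Q_s)\cong P_{\mathbb{F}}$.
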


\begin{proof}
Since $H^{(a^{sp})}$ is isomorphic to the permutation module induced by the action of $S_{asp}$ on the cosets of $S_a\wr S_{sp}$, it is clear that any vertex of an indecomposable summand is contained in a Sylow $p$-subgroup of $S_a\wr S_{sp}$ and therefore is conjugate to a subgroup of $Q_s$.
From Proposition \ref{P4} we have that $H^{(a^{sp})}(R_{as})$ is indecomposable. Therefore by Proposition \ref{pp1} we deduce that exists a unique indecomposable summand $U$ of $H^{(a^{sp})}$ such that $R_{as}$ is contained in a vertex of $U$. We know that $R_{as}\leq Q_s$ and certainly the Scott module 
$\mathrm{Sc}(S_{asp},S_{a}\wr S_{sp})=\mathrm{Sc}(S_{asp},Q_s)$ is an indecomposable summand of $H^{(a^{sp})}$, with vertex $Q_s$. Therefore $U=\mathrm{Sc}(S_{asp},Q_s)$ and 
by Theorem \ref{scott} module we have that $$H^{(a^{sp})}(Q_s)=U(Q_s)\cong P_{\mathbb{F}},$$ as $\mathbb{F}(N_{S_{asp}}(Q_s)/Q_s)$-modules.
\end{proof}

In order to prove Theorem \ref{T1} we need the following technical lemma, which is the analogous of \cite[Lemma 4.6]{GW}. 
Denote by $D_s$ the group $C\cap N_{S_{asp}}(Q_s)$.

\begin{lemma}\label{LemmaMark}
Let $p$ be a prime and let $a$ and $s$ be natural numbers such that $a<p$. Then the unique Sylow $p$-subgroup of $N_{S_{asp}}(Q_s)$ is the subgroup 
$\langle D_s, Q_s\rangle$ of $N_{S_{asp}}(R_{as})$.
\end{lemma}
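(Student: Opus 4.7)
The plan is to show that every $p$-element of $N_{S_{asp}}(Q_s)$ lies in $\langle D_s, Q_s\rangle$; since $D_s$ normalises $Q_s$ by definition, the set $\langle D_s, Q_s\rangle = D_sQ_s$ is already a subgroup of $p$-power order inside $N_{S_{asp}}(Q_s)$, so this inclusion will force it to be the unique Sylow $p$-subgroup.

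Write $B_k = \{ksp+1,\ldots,(k+1)sp\}$ for the $k$-th ``sheet'', so that $\{1,\ldots,asp\} = B_0 \sqcup \cdots \sqcup B_{a-1}$. Each $\bar{g}\in Q_s$ preserves every $B_k$ and acts there as $g\in P_s$ under the identification $j+ksp \leftrightarrow j$. The decisive first step is to prove that every $p$-element $h \in N_{S_{asp}}(Q_s)$ also preserves this partition. For this I would analyse the permutation action of $N_{S_{asp}}(Q_s)$ on the set of $Q_s$-orbits on $\{1,\ldots,asp\}$. The $a$ sheet-copies of a given $P_s$-orbit can be permuted only inside a subgroup of $S_a$, and $|S_a|$ is coprime to $p$ because $a < p$; two orbits coming from isomorphic Sylow factors $W_i^{a_i}$ in the wreath-product decomposition of $P_s$ are exchanged only inside $S_{a_i}$, where $a_i < p$ is a base-$p$ digit of $sp$. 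Both contributions have trivial $p$-part, so $h$ must fix every $Q_s$-orbit, and in particular every sheet, setwise.

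Once sheet-preservation is known, $h$ restricts on each $B_k$ to $h_k \in S_{sp}$. The condition $h\bar{g}h^{-1} \in Q_s$ forces $h_k g h_k^{-1}$ to be independent of $k$, which places each $h_k$ in $N_{S_{sp}}(P_s)$ and makes each $c_k := h_0^{-1}h_k$ centralise $P_s$. Since $h$ is a $p$-element and $P_s$ is the unique Sylow $p$-subgroup of $N_{S_{sp}}(P_s)$, every $h_k$ lies in $P_s$, so $h_0 \in P_s$ and each $c_k \in P_s \cap C_{S_{sp}}(P_s) = Z(P_s)$. The iterated wreath-product construction of $P_s$, together with the choice that $z_1\cdots z_s$ be central, implies $Z(P_s) \le \langle z_1,\ldots,z_s\rangle$ (a central element must commute with every $z_j$ and hence preserve each $\mathcal{O}_j$ setwise, acting on it as a power of $z_j$). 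Hence each $c_k$ is a product of powers of the $z_i$, and letting $\bar{h_0}\in Q_s$ be the diagonal lift of $h_0$ and $d$ be the element acting as $c_k$ on $B_k$ for each $k$, $d$ is identified with a product of powers of the $z_j$'s, placing $d$ in $C$. As $d = \bar{h_0}^{-1}h \in N_{S_{asp}}(Q_s)$, we conclude $d \in C \cap N_{S_{asp}}(Q_s) = D_s$ and $h = \bar{h_0}d \in Q_s D_s$, as required.

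The main obstacle will be the sheet-preservation step: $N_{S_{asp}}(Q_s)$ genuinely contains non-$p$-elements shuffling $Q_s$-orbits across sheets (for example, the sheet-swapping subgroup $S_a$ sits inside $C_{S_{asp}}(Q_s)$), and ruling this out for $p$-elements requires a careful combination of the two numerical inputs $a < p$ and ``each base-$p$ digit of $sp$ is less than $p$'' in order to bound the $p$-part of the image of $N_{S_{asp}}(Q_s)$ acting on the $Q_s$-orbit set.
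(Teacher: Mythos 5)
Your post-preservation argument is correct and takes a genuinely different route from the paper: you decompose $\{1,\ldots,asp\}$ into $a$ sheets of size $sp$, whereas the paper decomposes it into the $s$ sets $X_1,\ldots,X_s$ of size $ap$ (the $E_s$-orbits). Once sheet-preservation is granted, your chain $h_k\in N_{S_{sp}}(P_s)$, $h_k$ a $p$-element so $h_k\in P_s$, $c_k=h_0^{-1}h_k\in P_s\cap C_{S_{sp}}(P_s)=Z(P_s)\le\langle z_1,\ldots,z_s\rangle$, and hence $d=\overline{h_0}^{-1}h\in C\cap N_{S_{asp}}(Q_s)=D_s$, is clean and correct.

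However, the sheet-preservation step — which you yourself flag as the main obstacle — is a genuine gap, and the numerical facts $a<p$ and $a_i<p$ that you propose to rely on are not enough by themselves. The unsupported assertion is that ``the $a$ sheet-copies of a given $P_s$-orbit can be permuted only inside a subgroup of $S_a$,'' i.e.\ that $N_{S_{asp}}(Q_s)$ acting on the $Q_s$-orbits respects the partition into coupling classes $\{(O,0),\ldots,(O,a-1)\}$ for $O$ a $P_s$-orbit. Without that, the $Q_s$-orbits of size $p^i$ form a set of $a\cdot a_i$ elements which can perfectly well be $\geq p$ (e.g.\ $p=5$, $a=3$, $sp=2p^2$ gives $a\cdot a_2=6$), and a priori a $p$-element could induce a $p$-cycle on them; your bound then fails. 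One way to close the gap is to note that conjugation by $h\in N_{S_{asp}}(Q_s)$ sends the pointwise stabiliser (equivalently, kernel) of $Q_s$ on an orbit to the pointwise stabiliser on its image, and these stabilisers separate the coupling classes: two orbits have the same pointwise stabiliser in $Q_s$ if and only if they are sheet-copies of the same $P_s$-orbit. This forces $h$ to permute the $\le a_i$ coupling classes of each size and, within one, the $a$ sheets, and only then do the inequalities $a_i<p$ and $a<p$ kill the $p$-part. The paper sidesteps all of this by choosing the other block system: $Q_s$ acts on $\{X_1,\ldots,X_s\}$ as a full Sylow $p$-subgroup of $S_s$, so any $p$-element $g$ of the normaliser can be corrected by some $\tilde g\in Q_s$ to fix each $X_j$; then $a<p$ is applied once, inside each $S_{X_j}\cong S_{ap}$, to pin $g\tilde g^{-1}$ into $C$. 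You should either supply the stabiliser argument above, or switch to the $X$-blocks.
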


\begin{proof}

Keeping the notation introduced after Proposition \ref{ZZZZ}, for $j\in\{1,2,\ldots,as\}$ let $\mathcal{O}_j=\{(j-1)p+1,\ldots,jp\}$ and for $k\in\{1,\ldots,s\}$ let $$X_k=\bigcup_{l=0}^{a-1}\mathcal{O}_{ls+k}.$$
Since $Q_s$ normalizes $E_s$, it permutes the sets $X_1,\ldots,X_s$ as blocks for its action. Moreover given $x\in N_{S_{asp}}(Q_s)$ we have that 
$\pi_j^x\in\langle\pi_1,\ldots,\pi_s\rangle$ for all $j\in\{1,\ldots,s\}$. Therefore also $N_{S_{asp}}(Q_s)$ permutes as blocks for its action the sets 
$X_1,\ldots, X_s$. 

Let $g$ be a $p$-element of $N_{S_{asp}}(Q_s)$. The group
$\langle Q_s, g \rangle$ permutes the sets in $X:=\{X_1,\ldots,X_s\}$ as blocks for its action. Let
$$ \pi : \langle Q_s, g \rangle \rightarrow S_X $$
be the corresponding group homomorphism. 
By construction $Q_s$ acts on the sets $X_1, \ldots, X_s$
as a Sylow $p$-subgroup of $S_{\{ X_1, \ldots, X_s \}}$; hence
$Q_s\pi$ is a Sylow $p$-subgroup of $S_X$.
Therefore, since $\langle Q_s, g\rangle$
is a $p$-group, there exists $\tilde{g} \in Q_s$ such that $g \pi = \tilde{g} \pi$. Let $y =
g\tilde{g}^{-1}$. Since $y$ acts trivially on the sets in $X$, we may write
\[ y = g_1 \ldots g_s \]
where $g_j \in S_{X_j}$ for each $j$.
The $p$-group $\langle Q_s, y \rangle$ has as a subgroup $\langle \pi_j, y 
\rangle$. The permutation group induced by the subgroup on $X_j$, namely $\langle \pi_j, g_j\rangle$,
is a $p$-group acting on a set of size $ap$. Since $p>a$, the unique Sylow
$p$-subgroup of $S_{X_j}$ 
containing $\pi_j$
is $\langle z_{j}, z_{j+s}, \ldots, z_{j+(a-1)s} \rangle$. Hence $g_j \in \langle z_{j}, z_{j+s},\ldots, z_{j+(a-1)s} \rangle$ for
each $j\in \{1,\ldots, s\}$. Therefore $y \in \langle z_1, z_2 \ldots, z_{as}\rangle=C$. We also know that $y\in \langle Q_s,g\rangle\leq N_{S_{asp}}(Q_s)$. Therefore $y\in D_s$, 
and 
since $\tilde{g} \in Q_s$, it follows that $g \in \langle D_s, Q_s \rangle$, as required.
Conversely, the subgroup $\langle D_s, Q_s\rangle$ is contained in $N_{S_{asp}}(Q_s)$ because both $D_s$ and $Q_s$ are.
It follows that $\langle D_s, Q_s \rangle$ is the unique Sylow $p$-subgroup of $N_{S_{asp}}(Q_s)$.
\end{proof}

We are now ready to prove Theorem \ref{T1}

\begin{proof}[Proof of Theorem~\ref{T1}]
To simplify the notation we denote $N_{S_{asp}}(R_{as})$ by $K_s$.
Let $U$ be an indecomposable summand of $H^{(a^n)}$ with vertex $Q$. Let $\ell\in \{1,\ldots,\lfloor\frac{an}{p}\rfloor\}\cap\mathbb{N}$ be maximal with respect to the property that $R_\ell$ is a subgroup of (a conjugate of) the vertex $Q$. 
The Brou\'e correspondent $U(R_\ell)$ is a non-zero direct summand of $H^{(a^n)}(R_\ell)$ by Theorem \ref{BT2}. Therefore we deduce by Lemma \ref{L1} that there exist a natural number $s$ such that $\ell=as$ and $Z$ a non-zero summand of $H^{(a^{n-sp})}$ such that 
$$U(R_{as})\cong H^{(a^{sp})}(R_{as})\boxtimes Z$$ 
as $\mathbb{F}(K_s\times S_{a(n-sp)})$-modules. Since $R_{as}$ is normal in $Q_s$, it follows from Lemmas \ref{LDB} and \ref{LDB2} that there is an isomorphism of $\mathbb{F}(N_{K_s}(Q_s)\times S_{a(n-sp)})$-modules $$U(Q_s)\cong (U(R_{as}))(Q_s)\cong (H^{(a^{sp})}(R_{as}))(Q_s)\boxtimes Z.$$
By Proposition \ref{P4} we deduce that $H^{(a^{sp})}(Q_s)\boxtimes Z\neq 0$. Hence we have that $Q_s\leq Q$. 

Let $\mathcal{B}$ be a $p$-permutation basis for the $\mathbb{F}K_s$-module $H^{(a^{sp})}(R_{as})$ with respect to a Sylow $p$-subgroup of $K_s$ containing $Q_s$. It follows from Corollary \ref{cor:Brauer} and Lemma \ref{LemmaMark} that $\mathcal{C}=\mathcal{B}^{Q_s}$ is a $p$-permutation basis for the $\mathbb{F}N_{K_s}(Q_s)$-module $(H^{(a^{sp})}(R_{as}))(Q_s)$ 
with respect to the unique Sylow $p$-subgroup $P:=\left\langle D_s, Q_s\right\rangle$ of $N_{K_s}(Q_s)$. Let $\mathcal{C}'$ be a $p$-permutation basis for $Z$ with respect to $P'$, a Sylow $p$-subgroup of $S_{\{asp+1,\ldots,an\}}\cong S_{a(n-sp)}$.
Hence $$\mathcal{C} \boxtimes \mathcal{C}' = \{ v \otimes v' : v \in \mathcal{C},
v' \in \mathcal{C}'\}$$
is a $p$-permutation basis for $(H^{(a^{sp})}(R_{as}))(Q_s) \boxtimes Z$ with
respect to the Sylow $p$-subgroup 
$P\times P'$ of $N_{K_{s}}(Q_s) \times S_{a(n-sp)}$.

Suppose, for a contradiction, that $Q$ strictly contains $Q_s$. 
Since~$Q$ is a $p$-group there exists a $p$-element
$g \in N_{Q}(Q_s)$ such that
$g \not\in Q_s$. Notice that $Q_s$ has orbits of length at least $p$
on $\{1,\ldots, asp\}$ and fixes $\{asp+1,\ldots, an\}$. Since $g$
permutes these orbits as blocks for its action, we may factorize
$g$ as $g = hh^+$ where $h\in N_{S_{asp}}(Q_s)$ and 
$h^+ \in S_{a(n-sp)}$. 
By Lemma \ref{LemmaMark} we have that $\langle Q_s, h \rangle \le N_K(Q_s)$. 

Corollary~\ref{cor:Brauer} now implies that
$(\mathcal{C} \boxtimes \mathcal{C}')^{\langle Q_s, g\rangle}
\not=\varnothing$. Let \hbox{$v \otimes v' \in \mathcal{C} \boxtimes \mathcal{C}'$}
be such that $(v \otimes v')g = v \otimes v'$.
Then $v \in \mathcal{B}^{\langle Q_s, h \rangle}$.
But $Q_s$ is a vertex
of $H^{(a^{sp})}(R_{as})$, 
so it follows from
Corollary~\ref{cor:Brauer} that $h \in Q_s$. Hence $h'$ is a
non-identity element of~$Q$. By taking an appropriate power
of $h'$ we find that~$Q$ contains a product of one or more $p$-cycles
with support contained in \hbox{$\{asp+1, \ldots, an\}$}. This contradicts
our assumption that $l=as$ was maximal such that $R_{as}$ is contained
in a vertex of $U$.
Therefore $U$ has vertex $Q_s$. 

We saw above that there is an isomorphism $$U(Q_s) \cong (H^{(a^{sp})}(R_{as}))(Q_s)
\boxtimes Z$$ of $\mathbb{F}(N_{K_s}(Q_s) \times S_{a(n-sp)})$-modules.
This identifies $U(Q_s)$ as a vector space on which $N_{S_{an}}(Q_s)
=  N_{S_{asp}}(Q_s) \times S_{a(n-sp)}$ 
acts. It is clear from the isomorphism in Lemma \ref{L1}
that $N_{S_{asp}}(Q_s)$ acts on the first tensor factor and $S_{a(n-sp)}$
acts on the second. Hence the action of $N_{K_s}(Q_s)$ on $(H^{(a^{sp})}(R_{as}))(Q_s)$
extends to an action of $N_{S_{asp}}(Q_s)$ on $(H^{(a^{sp})}(R_{as}))(Q_s)$
and we obtain a tensor factorization $V \boxtimes Z$ of $U(Q_s)$
as a $N_{S_{asp}}(Q_s) \times S_{a(n-sp)}$-module. 
An outer tensor product of modules is projective if and only if
both factors are projective,
so by Theorem~\ref{BC1} and Corollary \ref{CS2},
$V$ 
is isomorphic to the projective cover of the trivial $\mathbb{F}(N_{S_{asp}}(Q_s)/Q_s)$-module,
$Z$ is a projective $\mathbb{F}S_{a(n-sp)}$-module, and
$U(Q_s)$ is the Green correspondent of~$U$.
\end{proof}

Notice that Theorem \ref{T1} implies that the only indecomposable summands of the Foulkes module $H^{(a^n)}$ that are Young modules are the projective ones,
because no non-projective Young module can possibly have vertex of the form $Q_s\in \mathrm{Syl}_p(S_a\wr S_{sp})$.

Another consequence of Theorem \ref{T1} is that the modular version of Foulkes' Conjecture is false. In fact if we consider $a<p\leq n$ then 
$H^{(n^a)}$ cannot be a direct summand of $H^{(a^n)}$ since there exists a non-projective Young module $Y$ in the direct sum decomposition of $H^{(n^a)}$, namely the Scott module $\mathrm{Sc}(S_{an}, S_n\wr S_a)$.
As explained above such $Y$ cannot appear as a direct summand of $H^{(a^n)}$. 

It is important to underline that for a given natural number $a<p$, Theorem \ref{T1} tells us that an indecomposable summand $U$ of $H^{(a^n)}$ has vertex 
$Q_s$ for some $s\in\{1,\ldots,\lfloor\frac{n}{p}\rfloor\}\cap\mathbb{N}$ but it does not guarantee that for every $s\in\{1,\ldots,\lfloor\frac{n}{p}\rfloor\}\cap\mathbb{N}$ there exists an indecomposable summand with vertex $Q_s$. We believe that such situation occurs. It will be material of another article to prove this property for the modules $H^{(2^n)}$ for any odd prime $p$ and any $n\in\mathbb{N}$.

\section{Corollaries on the decomposition matrix}

In this section we will give upper bounds to the entries of some columns of the decomposition matrix of $\mathbb{F}_pS_{an}$ when $a<p$.
In particular we will prove Theorem \ref{TTT1}. 

For the rest of the section let $\mathbb{F}_p$ be the finite field of size $p$ and let $a,w$ be natural numbers such that $w<a<p$. Let $B:=B(\gamma,w)$ be a block of the group algebra $\mathbb{F}_pS_{an}$ such that $\mathcal{F}(\gamma)\neq\emptyset$, where $\mathcal{F}(\gamma)$ is the set defined in the introduction, before the statement of Theorem \ref{TTT1}. Denote by $D$ a defect group of $B$.
For every $p$-regular partition $\nu$ of $an$, we will denote by $P^\nu$ the projective cover of the simple $\mathbb{F}_pS_{an}$-module $D^\nu$.

\begin{prop}
The block component of $H^{(a^n)}$ for the block $B$ is projective.
\end{prop}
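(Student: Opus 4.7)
The strategy is to show that no non-projective indecomposable summand of $H^{(a^n)}$ can possibly lie in the block $B$, which forces the $B$-component to be a sum of projective modules, hence projective. The input from the previous section is Theorem~\ref{T1}, which constrains the vertex of every non-projective summand of $H^{(a^n)}$ to be of the form $Q_s \in \mathrm{Syl}_p(S_a\wr S_{sp})$ for some $s \in \{1,\ldots,\lfloor n/p\rfloor\}$. The input from block theory is Theorem~\ref{Thm:SymDefectGroups}: the defect group $D$ of $B$ is conjugate to a Sylow $p$-subgroup of $S_{wp}$, and a standard theorem of Green forces the vertex of any indecomposable $\mathbb{F}_pS_{an}$-module lying in $B$ to be contained (up to conjugacy) in $D$.

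The plan is then as follows. Assume for contradiction that there is a non-projective indecomposable summand $U$ of $H^{(a^n)}$ belonging to $B$. By Theorem~\ref{T1}, a vertex of $U$ is $Q_s$ for some $s \geq 1$. Inside $Q_s$ sits the elementary abelian subgroup $E_s = \langle \pi_1,\ldots,\pi_s\rangle$, where each generator $\pi_j = z_j z_{j+s}\cdots z_{j+(a-1)s}$ is a product of $a$ disjoint $p$-cycles whose supports are pairwise disjoint as $j$ varies. Consequently $E_s$ acts with $asp$ non-fixed points in its natural action on $\{1,\ldots, an\}$.

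On the other hand, since $U$ lies in $B$, the subgroup $E_s$ is (after an $S_{an}$-conjugation) contained in $D$, which is a Sylow $p$-subgroup of a standard Young subgroup $S_{wp} \leq S_{an}$. In particular a conjugate of $E_s$ moves at most $wp$ points. Since the number of non-fixed points of a permutation group is invariant under conjugation in $S_{an}$, we obtain $asp \leq wp$, i.e.\ $as \leq w$. But $s \geq 1$ and the hypothesis on $B$ is $w < a$, contradiction. Hence every indecomposable summand of $H^{(a^n)}$ that lies in $B$ is projective, which is what we wanted.

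No serious obstacle is anticipated: once one identifies the elementary abelian $E_s$ inside $Q_s$ as the \emph{witness} subgroup whose support is too large to embed into a defect group of weight $w < a$, the argument reduces to bookkeeping of supports and invoking Green's result on vertices of modules in a block. The only care needed is to ensure that the conjugation invariance argument is applied to the number of non-fixed points of the whole subgroup $E_s$, not merely of a single element, and that this support count is really $asp$ (which follows immediately from the pairwise disjointness of the supports of the $\pi_j$).
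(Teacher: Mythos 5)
Your proposal is correct and follows essentially the same route as the paper: invoke Theorem \ref{T1} to pin down the vertex as $Q_s$ with $s\geq 1$, cite Green's theorem that a vertex is contained (up to conjugacy) in a defect group, use Theorem \ref{Thm:SymDefectGroups} to identify the defect group as a Sylow $p$-subgroup of $S_{wp}$, and derive a contradiction from a support count using $w<a$. The only cosmetic difference is that you pass to the subgroup $E_s\leq Q_s$ before counting moved points, whereas the paper compares $|\mathrm{supp}(Q_s)|=asp$ directly with $|\mathrm{supp}(D)|=wp$; since $E_s$ and $Q_s$ have the same support, this detour changes nothing.
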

\begin{proof}
Let $U$ be an indecomposable summand of $H^{(a^n)}$ lying in the block $B$. Suppose that $U$ is non-projective. Then by Theorem \ref{T1} there exists $t\in\mathbb{N}$ non-zero such that $Q_t\in\mathrm{Syl}_p(S_a\wr S_{tp})$ is a vertex of $U$. By \cite[Theorem 5, page 97]{Alperin} we deduce that a conjugate of 
$Q_t$ is contained in $D$. Moreover, by Theorem \ref{Thm:SymDefectGroups} we have that $D$ is conjugate to a Sylow $p$-subgroup of $S_{wp}$. This leads to a contradiction, because $$|\mathrm{supp}(Q_t)|=atp\geq ap>wp=|\mathrm{supp}(D)|.$$
\end{proof}

In order to prove Theorem \ref{TTT1}, we will need Scott's lifting theorem \cite[Theorem 3.11.3]{Benson}.
For the reader convenience we state it below. We will denote by $\mathbb{Z}_p$ the ring of $p$-adic integers.

\begin{teo}\label{scotty}
If $U$ is a 
direct summand of a permutation $\mathbb{F}_pG$-module $M$
then there is a 
$\mathbb{Z}_p G$-module
$U_{\mathbb{Z}_p}$, unique up to isomorphism,
such that $U_{\mathbb{Z}_p}$ is a direct summand of $M_{\mathbb{Z}_p}$
and $U_{\mathbb{Z}_p} \otimes_{\mathbb{Z}_p} \mathbb{F}_p \cong U$.
\end{teo}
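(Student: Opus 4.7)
The plan is to prove Scott's lifting theorem by idempotent lifting applied to the endomorphism ring of the permutation module $M_{\mathbb{Z}_p}$.

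First I would set up the relevant endomorphism rings. Since $U$ is a direct summand of the $\mathbb{F}_p G$-module $M$, there is an idempotent $e \in E := \mathrm{End}_{\mathbb{F}_pG}(M)$ with $eM \cong U$. On the integral side, let $\widetilde{E} := \mathrm{End}_{\mathbb{Z}_pG}(M_{\mathbb{Z}_p})$. Because $M$ is a permutation module, $M_{\mathbb{Z}_p}$ is free as a $\mathbb{Z}_p$-module on a $G$-stable basis, and so $\widetilde{E}$ is also free of finite rank over $\mathbb{Z}_p$. The key input is that reduction modulo $p$ induces a surjective $\mathbb{Z}_p$-algebra homomorphism $\pi : \widetilde{E} \twoheadrightarrow E$. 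This follows from the isomorphism $\mathrm{Hom}_{\mathbb{Z}_pG}(M_{\mathbb{Z}_p}, M_{\mathbb{Z}_p}) \otimes_{\mathbb{Z}_p} \mathbb{F}_p \cong \mathrm{Hom}_{\mathbb{F}_pG}(M, M)$ when $M_{\mathbb{Z}_p}$ is $\mathbb{Z}_p$-free, which I would verify by picking a $\mathbb{Z}_p$-basis of $M_{\mathbb{Z}_p}$ and comparing matrices.

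Next I would lift the idempotent $e$ to $\widetilde{E}$. Since $\widetilde{E}$ is a finitely generated $\mathbb{Z}_p$-module, it is $p$-adically complete, hence also complete with respect to $\ker\pi \supseteq p\widetilde{E}$. The standard idempotent lifting theorem (for a surjection with nilpotent or $p$-adically complete kernel) then yields $\widetilde{e} \in \widetilde{E}$ with $\widetilde{e}^2 = \widetilde{e}$ and $\pi(\widetilde{e}) = e$. The explicit recipe is to start from any lift $e_0$ of $e$ and iteratively replace $e_n$ by $3e_n^2 - 2e_n^3$, which converges $p$-adically to an idempotent congruent to $e_0$ modulo $p$.

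Setting $U_{\mathbb{Z}_p} := \widetilde{e} M_{\mathbb{Z}_p}$ gives a direct summand of $M_{\mathbb{Z}_p}$, and tensoring with $\mathbb{F}_p$ yields $U_{\mathbb{Z}_p} \otimes_{\mathbb{Z}_p} \mathbb{F}_p \cong \overline{\widetilde{e}}\, M = eM \cong U$, so existence is established. For uniqueness, I would invoke the fact that any two idempotents $\widetilde{e}_1, \widetilde{e}_2 \in \widetilde{E}$ reducing to the same idempotent modulo the Jacobson-radical-contained ideal $\ker\pi$ are conjugate by a unit $u \in \widetilde{E}^{\times}$ (another standard consequence of idempotent lifting in semiperfect-type rings); conjugation by $u$ then identifies the resulting summands of $M_{\mathbb{Z}_p}$.

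The main obstacle, and the step that needs the most care, is establishing the surjectivity and the correct description of $\ker\pi$: one needs that the induced map on endomorphism rings actually comes from reduction on a free $\mathbb{Z}_p$-basis, so that every $\mathbb{F}_p$-linear $G$-equivariant endomorphism of $M$ can be written as a matrix with entries in $\mathbb{F}_p$ that lifts to a matrix in $\mathbb{Z}_p$. Everything else is a formal consequence of $p$-adic completeness of $\widetilde{E}$ and the classical idempotent lifting machinery.
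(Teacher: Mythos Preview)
The paper does not actually prove this statement: it is quoted verbatim as Scott's lifting theorem and cited from Benson \cite[Theorem 3.11.3]{Benson}, with no proof given. Your proposal via idempotent lifting in the $p$-adically complete ring $\widetilde{E}=\mathrm{End}_{\mathbb{Z}_pG}(M_{\mathbb{Z}_p})$ is correct and is precisely the standard argument (and essentially the one in Benson's reference), so in that sense you are aligned with the paper's intended source.

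One small point to tighten in your uniqueness step: you only argue that two lifts of the \emph{same} idempotent $e$ are conjugate in $\widetilde{E}$. The theorem asserts uniqueness of $U_{\mathbb{Z}_p}$ up to isomorphism, so you must also treat the case where a second summand $U'_{\mathbb{Z}_p}\mid M_{\mathbb{Z}_p}$ reduces to a module merely \emph{isomorphic} to $U$, i.e.\ its associated idempotent $e'$ is only conjugate to $e$ in $E$. This is handled by first lifting a unit conjugating $e'$ to $e$ (units lift along $\pi$ since $\ker\pi\subseteq J(\widetilde{E})$) and then applying your argument; alternatively one invokes Krull--Schmidt for $\mathbb{Z}_pG$-lattices together with the fact that any direct-sum decomposition of $M_{\mathbb{Z}_p}$ into indecomposables reduces to one of $M$.
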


Let $P^\nu_{\mathbb{Z}_p}$ be the $\mathbb{Z}_p$-free
$\mathbb{Z}_pS_{an}$-module whose reduction modulo $p$ is
$P^\nu$. By Brauer reciprocity (see for instance 
\cite[\S 15.4]{Serre}), the ordinary character of~$P^\nu_{\mathbb{Z}_p}$~is 
\[ \tag{$\star$} \psi^\nu = \sum_\mu d_{\mu \nu} \chi^\mu. \]
It is well known that 
if $d_{\mu \nu} \not=0$ then $\nu$ dominates~$\mu$. Hence
the sum may be taken over those partitions $\mu$ dominated by~$\nu$.

We are now ready to prove Theorem \ref{TTT1}.

\begin{proof}[Proof of Theorem~\ref{TTT1}]
Since $\mathcal{F}(\gamma)\neq\emptyset$ the block component $W$ of $H^{(a^n)}$ for $B$ is non zero. 
Let $\zeta_1,\ldots,\zeta_s$ be the $p$-regular partitions of $an$ such that $$W=P^{\zeta_1}\oplus P^{\zeta_2}\oplus\cdots\oplus P^{\zeta_s}.$$
From the definition of $\mathcal{F}(\gamma)$ and by Theorem \ref{scotty}, it follows that
the ordinary character of $W$
is 
$$\psi^{\zeta_1} + \cdots + \psi^{\zeta_s}=\sum_{\mu\in\mathcal{F}(\gamma)}\big(\sum_{i=1}^s d_{\mu\zeta_i}\big)\chi^\mu.$$
By hypothesis $\lambda$ is a maximal partition in the dominance order
on $\mathcal{F}(\gamma)$, 
and by~($\star$) each $\psi^{\zeta_j}$ is a sum of ordinary irreducible
characters $\chi^\mu$ for partitions $\mu$ dominated by $\zeta_j$. 
Therefore one of the partitions $\zeta_j$ must equal $\lambda$, 
as required.

Therefore $P^\lambda$ is a direct summand of $H^{(a^n)}$ and $\psi^\lambda$ is a summand of the Foulkes character $\phi^{(a^n)}$. Hence
$$d_{\mu\lambda}=\left\langle\psi^{\lambda},\chi^\mu\middle\rangle\leq \middle\langle\phi^{(a^n)},\chi^\mu\right\rangle,$$
for all $\mu\vdash an$.
In particular if $\mu\notin \mathcal{F}(\gamma)$ then $d_{\mu\lambda}=0$.
\end{proof}

As already mentioned in the introduction, Theorem \ref{TTT1} allows us to recover new information on the decomposition numbers via the study of the ordinary Foulkes character $\phi^{(a^n)}$.
An example of this possibility is the following result.

\begin{cor}\label{x}
Let $\lambda$ be a $p$-regular partition of $na$. Denote by $\gamma$ the $p$-core of $\lambda$. If $\lambda$ is maximal in $\mathcal{F}(\gamma)$, then $[S^\mu:D^\lambda]=0$ for all $\mu\vdash na$ such that $\mu$ has more than $n$ parts. 
\end{cor}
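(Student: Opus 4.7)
The plan is to deduce the corollary directly from Theorem~\ref{TTT1} together with a classical constraint on the irreducible constituents of $\phi^{(a^n)}$. By Theorem~\ref{TTT1}, for every $\mu \vdash na$ we have $[S^\mu:D^\lambda] \leq \left\langle \phi^{(a^n)}, \chi^\mu\right\rangle$ (with the right-hand side equal to zero for $\mu \notin \mathcal{F}(\gamma)$), so the corollary reduces to showing that $\left\langle \phi^{(a^n)}, \chi^\mu\right\rangle = 0$ whenever $\mu$ has more than $n$ parts.

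To obtain this vanishing I would realize $H^{(a^n)}$ as an ordinary direct summand of the Young permutation module $M^{(a^n)} = \mathbb{C}\!\uparrow_{S_a^{\times n}}^{S_{an}}$. By transitivity of induction through the chain $S_a^{\times n} \leq S_a \wr S_n \leq S_{an}$, this amounts to showing that the trivial $\mathbb{C}(S_a \wr S_n)$-module is a direct summand of $\mathbb{C}\!\uparrow_{S_a^{\times n}}^{S_a \wr S_n}$. Since $S_a^{\times n}$ is normal in $S_a \wr S_n$ and we are inducing the trivial character, the induced module is the inflation to $S_a \wr S_n$ of the regular representation of the quotient $S_n \cong (S_a \wr S_n)/S_a^{\times n}$, and in characteristic zero the regular module of $S_n$ obviously contains the trivial module as a summand. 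Consequently $\phi^{(a^n)}$ is a character summand of the permutation character of $M^{(a^n)}$.

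Young's rule then forces every irreducible constituent $\chi^\mu$ of $\phi^{(a^n)}$ to be labelled by a partition $\mu$ dominating $(a^n)$. If $\mu = (\mu_1, \mu_2, \ldots, \mu_k)$ has more than $n$ parts, i.e.\ $k > n$, then
\[ \sum_{i=1}^n \mu_i \;<\; \sum_{i=1}^k \mu_i \;=\; na \;=\; \sum_{i=1}^n a, \]
so $\mu \not\trianglerighteq (a^n)$, and $\chi^\mu$ cannot appear in $\phi^{(a^n)}$. There is no serious obstacle in this argument; the only point that merits care is the direction of the dominance condition in Young's rule, which one checks to confirm that partitions with too \emph{many} parts (rather than too few) are the ones being excluded.
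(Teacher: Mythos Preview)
Your proof is correct and follows exactly the same approach as the paper: reduce via Theorem~\ref{TTT1} to the vanishing $\langle \phi^{(a^n)}, \chi^\mu\rangle = 0$ for $\mu$ with more than $n$ parts, and then invoke this vanishing. The paper simply cites this vanishing as a well-known fact (referencing \cite[Proposition~2.7]{Giannelli}), whereas you supply the standard proof via $H^{(a^n)}\mid M^{(a^n)}$ and Young's rule; this is a matter of level of detail rather than a different route.
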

\begin{proof}
It is a well known fact (see for instance \cite[Proposition 2.7]{Giannelli}) that if $\mu$ has more than $n$ parts then $$\left\langle\phi^{(a^n)},\chi^\mu\right\rangle=0.$$
The statement now follows from Theorem \ref{TTT1}.
\end{proof}

We conclude with an explicit example.

\begin{esem}\label{E:last}
Let $a=4$, $n=p=5$ and let $\lambda=(18,2)$ be a weight-$3$ partition of $20$. The $5$-core of $\lambda$ is $\gamma=(3,2)$ and the multiplicity of $\chi^\lambda$ as an irreducible constituent of $\phi^{(4^5)}$ is $1$, by \cite[Theorem 5.4.34]{JK}. Therefore $\lambda\in\mathcal{F}(\gamma)$ and it is clearly maximal under the dominance order. By Corollary \ref{x} we obtain a number of non-trivial zeros in the column labelled by $\lambda$ of the decomposition matrix of $S_{20}$ in characteristic $5$. For instance, all the partitions $\mu$ obtained from $(3,2,1^5)$ by adding two $5$-hooks lie in $\mathcal{F}(\gamma)$ and are such that $[S^\mu:D^\lambda]=0$.
\end{esem}

\section*{Acknowledgments}
I would like to thank my PhD supervisor Dr. Mark Wildon for his helpful advice which guided me throughout this work.

\end{document}